\documentclass[reqno, colorBG]{amsart}
\usepackage{amssymb}
\usepackage{amsmath}
\usepackage[mathscr]{euscript}

\usepackage[small]{caption}
\usepackage{psfrag}
\usepackage{graphicx}
\graphicspath{ {images/} }
\usepackage{color}

\makeatletter
\@addtoreset{equation}{section}
\makeatother

\newtheorem{theorem}{Theorem}[section]
\newtheorem{lem}[theorem]{Lemma}

\newtheorem{cor}[theorem]{Corollary}
\newtheorem{rem}[theorem]{Remark}

\newcounter{as}[section]

\newcommand{\mc}[1]{{\mathcal #1}}
\newcommand{\mf}[1]{{\mathfrak #1}}
\newcommand{\mb}[1]{{\mathbf #1}}
\newcommand{\bb}[1]{{\mathbb #1}}

\begin{document}

\title[Asymptotics for scaled KS equation with General Potential]{Asymptotics for scaled Kramers-Smoluchowski equations in several dimensions with general potentials}

\author{Insuk Seo and Peyam Tabrizian}

\address{\noindent Insuk Seo, Seoul National University, Department of Mathematical Sciences and Research
  Institute of Mathematics, GwanAk Ro 1, Seoul 08826, Republic of Korea.
 \newline
  e-mail: \rm \texttt{insuk.seo@snu.ac.kr} }

\address{\noindent Peyam Tabrizian, University of California, Irvine, Department of Mathematics, 340 Rowland Hall, Irvine,
CA 92697, USA.
  \newline e-mail: \rm \texttt{ptabrizi@uci.edu} }

\keywords{Kramers-Smoluchowski equation, system of reaction-diffusion equations, Morse potential, scaling limit}

\subjclass[2010]{35K15, 35K57, 35J20}

\begin{abstract}
In this paper, we generalize the results
of Evans and Tabrizian \cite{ET}, by deriving asymptotics for the
time-rescaled Kramers-Smoluchowski equations, in the case of a general non-symmetric potential function with multiple wells. The asymptotic limit is described by a system of reaction-diffusion equations whose coefficients are determined by the Kramers constants at the saddle points of the potential function and the Hessians of the potential function at global minima.
\end{abstract}

\maketitle

\section{Introduction}

In this paper, we consider the following Kramers-Smoluchowski equation
\begin{equation}
\begin{cases}
\tau_{\epsilon}\left(\rho_{t}^{\epsilon}-a\Delta_{x}\rho^{\epsilon}\right)\;=\;\textup{div}\left[D\rho^{\epsilon}+\frac{1}{\epsilon}\rho^{\epsilon}D\Phi\right] & \mbox{in }U\times\mathbb{R}^{d}\times[0,T]\;,\\
\rho^{\epsilon}\;=\;\rho_{0} & \mbox{on }U\times\mathbb{R}^{d}\times\{t=0\}\;,
\end{cases}\label{p01-1}
\end{equation}
where $\epsilon>0$ is a scaling parameter, $\rho^{\epsilon}=\rho^{\epsilon}(x,\xi,t)$
is the chemical density, and $\Phi=\Phi(\xi)$ is a smooth potential
function on $\mathbb{R}^{d}$ with multiple wells. This PDE
models a simple chemical reaction on the atomic level. For more information
on the chemical background, consult \cite{PSV1, T} and the references
therein.

Our primary concern is the limiting behavior of $\rho^{\epsilon}$
when $\epsilon$ tends to $0$. In this paper, we show that
the asymptotic limit of $\rho^{\epsilon}$ satisfies a system of reaction-diffusion equations. See Theorem
\ref{t12} for the rigorous formulation of this result.

The one-dimensional case $d=1$ has already been investigated in \cite{PSV1,PSV2, HN, AMPSV}. In those works, $\Phi$ is assumed to be an even potential function with two wells,
and the limit of $\rho^{\epsilon}$ is derived using tools
such as $\Gamma$-convergence \cite{PSV1,PSV2}, a Raleigh-type dissipation functional \cite{HN},
and a Wasserstein gradient flow \cite{AMPSV}. We refer to \cite{ET, T} for more
detailed survey of the history of the one-dimensional problem.

 In \cite{ET}, Evans and Tabrizian developed
a new and direct approach for this problem, based on
a clever test function that satisfies an elliptic PDE, as well as using capacity estimates from \cite{BEGK}. The techniques in \cite{ET} are robust enough to be generalized in higher dimensions, where
$\Phi$ is a double-well potential on $\mathbb{R}^{d}$. The limitation, however, is that it only works for the case where $\Phi$ is symmetric. In this paper, we remove the symmetry-assumption
 and further allow $\Phi$ to have more than two wells. In that case, our analysis becomes more
delicate, and requires a generalized version of variational
principle in \cite{ET}, which is Theorem \ref{th41} of the current paper. We note that a similar result to the current paper has recently been derived by \cite{MZ}
using tools from semiclassical analysis.

We would like to emphasize that the tools developed in Theorem \ref{t41} are also useful for analyzing metastable random processes, which are processes with multiple stable equilibria. It has been noted in \cite{L1, L2, LS, RS} that, by investigating the inhomogeneous version of our main theorem (Theorem \ref{t41}), one can obtain a complete analysis of the metastability of such processes. In addition, this method turns out to be extremely effective in the investigation of metastable diffusions. Two recent papers  \cite{LS, RS} obtained scaling limits of metastable diffusions known as small random perturbations of dynamical systems. Although such a scaling limit has already been developed for a wide class of metastable Markov chains, it has not been previously known for metastable diffusions.

Our paper is organized as follows: In Section \ref{s2}, we introduce the detailed
model and our assumptions on $\Phi$, as well as the main result of this paper. In Section \ref{s3}, we derive some preliminary estimates, in Section \ref{s4} we state and prove the generalized
variational principle mentioned above, and in Section \ref{s5} we construct
the auxiliary test function. Finally, Section \ref{s6} contains the proof of our main result.

\section{Model and Main Result\label{s2}}

\subsection{Potential $\Phi$}

Let $\Phi:\bb{R}^{d}\rightarrow\bb{R}$ be a smooth potential function with multiple minima. In this section, we state our assumptions on $\Phi$, and introduce some notation about the structure of its valleys.

First, we assume that $\Phi(\xi)$ grows to $+\infty$ as $|\xi|\rightarrow\infty$.
Furthermore, suppose $\Phi$ has exponentially tight level
sets, meaning that for all $a\ge0$ there exists a constant $C(a)>0$
such that
\begin{equation}
\int_{\{\xi:\Phi(\xi)\ge a\}}e^{-\Phi(\xi)/\epsilon}d\xi \;\le\; C(a)e^{-a/\epsilon}\label{tight}
\end{equation}
for all $\epsilon\in(0,1)$. Note that \eqref{tight}
is achieved if $\Phi$ grows at least linearly as $|\xi|\rightarrow\infty$.
Moreover, as observed in \cite[Assumption H.1]{BEGK},
\eqref{tight} is also valid if
\begin{equation*}
\liminf_{\xi\rightarrow\infty}\left|\nabla\Phi(\xi)\right|
\;=\;
\liminf_{\xi\rightarrow\infty}\left[\,|\nabla\Phi(\xi)|-2\Delta\Phi(\xi)\right]
\;=\;\infty\;.
\end{equation*}

\begin{figure}
  \protect
\includegraphics[scale=0.40]{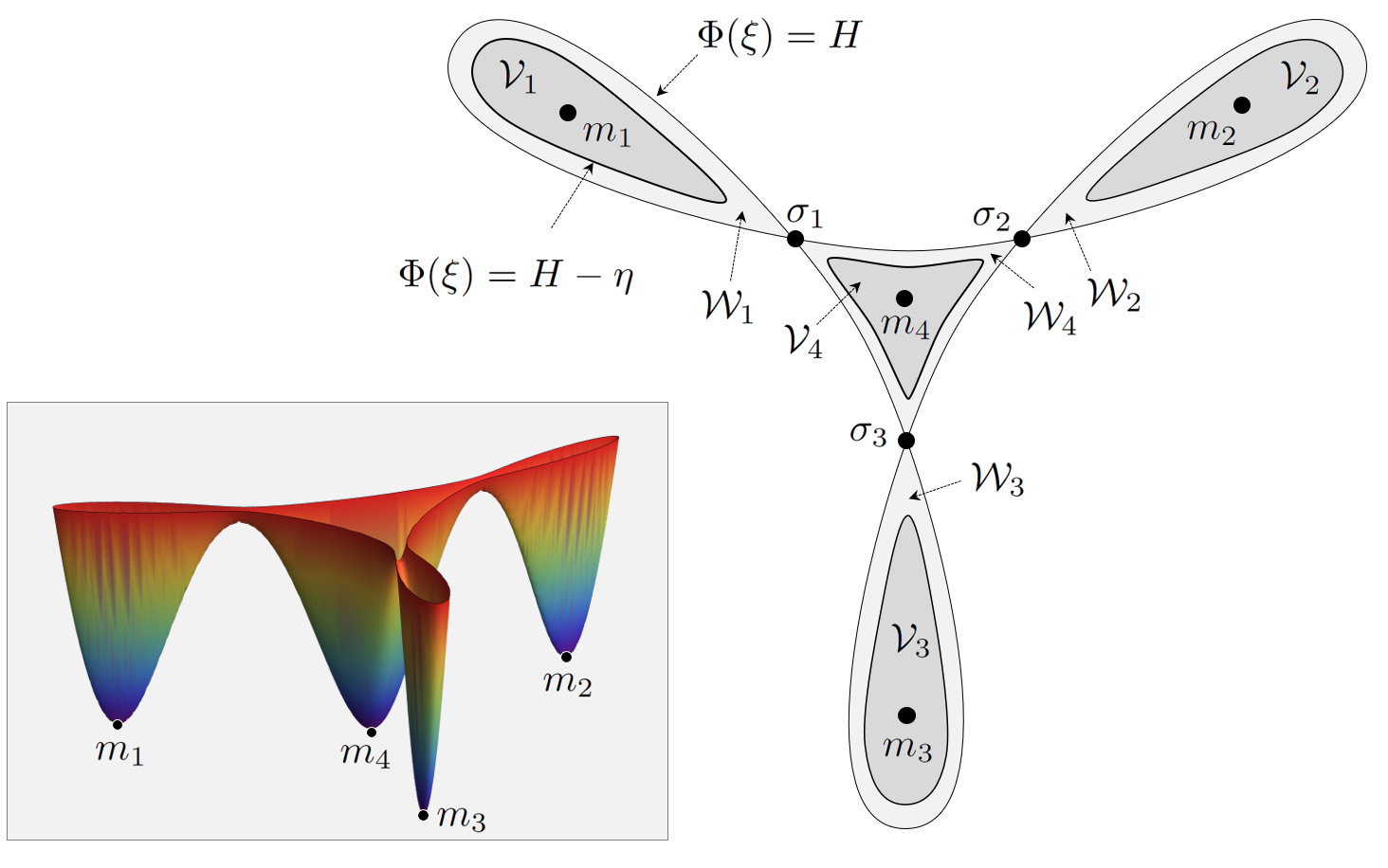}\protect
  \caption{\label{fig1}(Left) An example of the potential function $\Phi$ with four valleys, i.e., $K=4$. (Right) Visualization of the inter-valley structure corresponding to $\Phi$.}
\end{figure}

Now we introduce the inter-valley structure corresponding to the potential function $\Phi$. We refer to Figure \ref{fig1} for the illustration of the definitions below. We will assume that $\Phi$ has finitely many critical points and
achieves minimum at several points. This feature can be characterized more precisely by first defining the valleys of $\Phi$. Fix $H\in\bb{R}$
and let $\mf{\mc{S}}=\{\sigma_{1},\,\sigma_{2},\,\cdots,\,\sigma_{L}\}$
be the set of saddle points of $\Phi$ with height $H$, i.e., $\Phi(\sigma)=H$.

Denote by $\mc{W}_{1},\,\mc{W}_{2},\,\cdots,\,\mc{W}_{K}$
the connected components/valleys of the set $\{\xi:\Phi(\xi)<H\}$. Assume that
$\overline{\mc{W}}_{1}\cup\overline{\mc{W}}_{2}\cup\cdots\cup\overline{\mc{W}}_{K}$
is connected (here $\overline{\mc{A}}$ is the closure of the set $\mc{A}$).

The minimum of $\Phi$ on the valley $\mc{W}_{i}$, $1\le i\le K$,
is achieved at $m_{i}\in\mc{W}_{i}$ and we suppose that
\begin{equation*}
\Phi(m_{1})=\Phi(m_{2})=\cdots=\Phi(m_{K})=h
\end{equation*}
so that valleys $\mc{W}_{1},\,\mc{W}_{2},\,\cdots,\,\mc{W}_{K}$
have the same depth $H-h$. Hence, $m_{1}$, $\cdots$,
$m_{K}$ are minima of $\Phi$.

Let
\begin{equation}\label{sij}
\mc{S}_{i,j}=\overline{\mc{W}}_{i}\cap\overline{\mc{W}}_{j}\subset\mc{S}\;\;;\;1\le i\neq j\le K
\end{equation}
be the set of saddle points between valleys $\mc{W}_{i}$ and
$\mc{W}_{j}$. We select small enough $\eta\in(0,H-h)$ so that
there is no critical point $\xi$ of $\Phi$ such that $\Phi(\xi)\in(H-\eta,\,H)$.
Fix such $\eta$ and define
\begin{equation}
\mc{V}_{i}=\{\xi\in\mc{W}_{i}:\Phi(\xi)<H-\eta\}\;\;;\;1\le i\le K\;.\label{e01}
\end{equation}
Then, the set $\mc{V}_i$, $1\le i \le K$, is connected. Define
\begin{equation}
\Delta=\left(\bigcup_{i=1}^{K}\mc{V}_{i}\right)^{c}\;.\label{del}
\end{equation}
Finally, we assume that, for each saddle point $\sigma\in\mc{S}$,
the Hessian $(D_{\xi}^{2}\Phi)(\sigma)$ has one negative eigenvalue
$-\lambda_{\sigma}$ and $(d-1)$ positive eigenvalues, and for each
minimum $m_{i}$, $1\le i\le K$, the Hessian $(D_{\xi}^{2}\Phi)(m_{i})$
is non-degenerate.

\subsection{Kramers-Smoluchowski equation}

We now describe the scaled Kramers-Smoluchowski equation.
Define
\begin{equation}
\tau_{\epsilon}=\epsilon^{-1}e^{-(H-h)/\epsilon}\;\;\mbox{and}\;\;\sigma^{\epsilon}(\xi)=Z_{\epsilon}^{-1}e^{-\Phi(\xi)/\epsilon}\;,\label{e001}
\end{equation}
where the normalizing factor $Z_{\epsilon}$ is defined by
\begin{equation}
Z_{\epsilon}=\int_{\bb{R}^{d}}e^{-\Phi(\xi)/\epsilon}\,d\xi\label{e0011}
\end{equation}
so that $\int_{\bb{R}^{m}}\sigma^{\epsilon}d\xi=1$. Note that
$Z_{\epsilon}<\infty$ because of \eqref{tight}.

Let $U$ be a bounded, smooth domain in $\bb{R}^{n}$ for some
$n\in\bb{N}$ and let $\frac{\partial\rho^{\epsilon}}{\partial\nu}=D_{x}\rho^{\epsilon}\cdot\nu$
be the outward normal derivative along the boundary $\partial U$.
Let $a:\bb{R}^{n}\rightarrow\bb{R}$ be a smooth and bounded
function such that $a(\cdot)\ge a_{0}>0$ for some constant $a_{0}$.
Fix $T>0$ and consider the equation
\begin{equation}
\begin{cases}
\tau_{\epsilon}\left(\rho_{t}^{\epsilon}-a\,\Delta_{x}\rho^{\epsilon}\right)\;=\;
\textup{div}_{\xi}\left[D_{\xi}\rho^{\epsilon}+\frac{1}{\epsilon}\rho^{\epsilon}\,D_{\xi}\Phi\right] & \mbox{in }U\times\bb{R}^{d}\times[0,T]\;,\\
\frac{\partial\rho^{\epsilon}}{\partial\nu}\;=\;0 & \mbox{on }\partial U\times\bb{R}^{d}\times[0,T]\;,\\
\rho^{\epsilon}\;=\;\rho_{0}^{\epsilon} & \mbox{on }U\times\bb{R}^{d}\times\{t=0\}\;.
\end{cases}\label{p01}
\end{equation}

For $1\le i\le K$, we write
\begin{equation}
\mu_{i}=\frac{1}{\sqrt{\det (D_{\xi}^{2}\Phi)(m_{i})}}\;,\;\;\mu=\sum_{i=1}^{K}\mu_{i}
\;,\;\;\mbox{and}\;\;a_{i}=a(m_{i})\;.\label{e003}
\end{equation}
For $\sigma\in\mc{S}$, denote by $\lambda_{\sigma}$ the unique
negative eigenvalue of the matrix $D_{\xi}^{2}\Phi(\sigma)$, and
define the Kramers constant at $\sigma$ by
\begin{equation*}
\kappa_{\sigma}=\frac{-\lambda_{\sigma}}{2\pi\sqrt{-\det (D_{\xi}^{2}\Phi)(\sigma)}}\;.
\end{equation*}
Recall $\mc{S}_{i,j}$ from \eqref{sij} and define
\begin{equation}
\kappa_{i,j}=\sum_{\sigma\in\mc{S}_{i,j}}\kappa_{\sigma}\;\;;\;1\le i\neq j\le K\;.\label{kappa}
\end{equation}
For convenience we set $\kappa_{i,i}=0$ for all $1\le i\le K.$ Define
the rate constants by
\begin{equation}
r_{i,j}=\frac{\kappa_{i,j}}{\mu_{i}}\;\;;\;1\le i\neq j\le K\;.\label{rij}
\end{equation}

Now we explain our assumptions on the initial data. Consider the normalized initial data
\begin{equation*} u_{0}^{\epsilon}(x,\xi)=\frac{\rho_{0}^{\epsilon}(x,\xi)}{\sigma^{\epsilon}(\xi)}\;.
\end{equation*}
Then, we assume that $u_{0}$ is bounded on $\mathbb{R}$, is differentiable with respect to $x$ and $\xi$, and satisfies
\begin{equation}
\label{init}
\int_{\bb{R}^{d}}\int_{U}\left(|u_{0}^{\epsilon}|^{2}+\left|D_{x}u_{0}^{\epsilon}
\right|^{2}\,+\,
\frac{1}{\tau_{\epsilon}}\left|D_{\xi}u_{0}^{\epsilon}\right|^{2}\right)
\sigma^{\epsilon}\,dxd\xi\;<\;\infty\;.
\end{equation}
Finally, assume that, for smooth functions $\alpha_{1}^{0},\,\cdots,\,\alpha_{K}^{0}:U\rightarrow\bb{R}$, we have the following convergence as $\epsilon$ tends to $0$:
\begin{equation*}
u_{0}^{\epsilon}(x,\xi)\rightarrow\frac{\mu}{\mu_{i}}\alpha_{i}^{0}\;\;\mbox{locally uniformly in }\overline{U}\times\mc{W}_{i}\;\;;\;1\le i\le K\;.
\end{equation*}
Under this set of assumptions, we are now ready to state the main result of our paper:
\begin{theorem}
\label{t12}For all $t\in[0,T]$, we have, in the sense of Remark \ref{rem22},
\begin{equation}
\rho^{\epsilon}(x,\xi,t)\,d\xi \rightharpoonup\sum_{i=1}^{K}\alpha_{i}(x,t)\,\delta_{m_{i}}\;\;\text{as\;\; $\epsilon\rightarrow 0$}\;,\label{wc}
\end{equation}
where the smooth functions $\alpha_{1},\,\cdots,\,\alpha_{K}$
on $U\times[0,T]$ solve the system of linear reaction-diffusion
equations given by
\begin{equation}
\begin{cases}
\partial_{t}\alpha_{i}-a_{i}\Delta\alpha_{i}=\sum_{j=1}^{K}(r_{j,i}\alpha_{j}-r_{i,j}\alpha_{i}) & \mbox{in }U\times[0,T]\\
\frac{\partial\alpha_{i}}{\partial\nu}=0 & \mbox{on }\partial U\times[0,T]\\
\alpha_{i}=\alpha_{i}^{0} & \mbox{on }t=0\;
\end{cases}\label{e1}
\end{equation}
for all $1\le i\le K$.
\end{theorem}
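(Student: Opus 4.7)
The overall strategy, inspired by \cite{ET}, is to recast \eqref{p01} in terms of $u^{\epsilon}=\rho^{\epsilon}/\sigma^{\epsilon}$, exploit self-adjointness with respect to the Gibbs measure $\sigma^{\epsilon}d\xi$, derive a Dirichlet-form-based energy estimate that forces $u^{\epsilon}$ to concentrate on the valleys, and finally extract the system \eqref{e1} by testing against a carefully constructed auxiliary function from Section \ref{s5}.

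Differentiating $\int|u^{\epsilon}(t)|^{2}\sigma^{\epsilon}\,dxd\xi$ along the equation and using \eqref{init} yields uniform-in-$\epsilon$ bounds on $u^{\epsilon}$ in $L^{\infty}_{t}L^{2}(\sigma^{\epsilon})$, on $D_{x}u^{\epsilon}$ in $L^{2}(\sigma^{\epsilon})$, and on $\tau_{\epsilon}^{-1/2}D_{\xi}u^{\epsilon}$ in $L^{2}(\sigma^{\epsilon})$. Since $\tau_{\epsilon}$ is exponentially small in $1/\epsilon$, the last estimate forces $u^{\epsilon}(x,\cdot,t)$ to become asymptotically constant on each valley $\mc{V}_{i}$. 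Combined with Laplace asymptotics for $\int_{\mc{V}_{i}}\sigma^{\epsilon}d\xi\to\mu_{i}/\mu$ and the initial-condition hypothesis, this identifies the constant value as $(\mu/\mu_{i})\alpha_{i}(x,t)$ for some limit functions $\alpha_{i}$, and the weak convergence \eqref{wc} follows immediately.

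To verify that the $\alpha_{i}$ satisfy \eqref{e1}, I would fix $\phi\in C^{\infty}_{c}(U\times[0,T))$ and $1\le i\le K$, and test the weak form of \eqref{p01} against $\phi(x,t)p_{i}^{\epsilon}(\xi)$, where $p_{i}^{\epsilon}$ is the function constructed in Section \ref{s5} --- essentially the equilibrium potential between $m_{i}$ and the other minima, with $p_{i}^{\epsilon}\approx 1$ on $\mc{V}_{i}$, $p_{i}^{\epsilon}\approx 0$ on the other $\mc{V}_{j}$, and a carefully prescribed profile across each saddle. Using $D_{\xi}\sigma^{\epsilon}=-\epsilon^{-1}\sigma^{\epsilon}D_{\xi}\Phi$ to collapse the right-hand side of \eqref{p01} into $\textup{div}_{\xi}(\sigma^{\epsilon}D_{\xi}u^{\epsilon})$ and then integrating by parts in $\xi$, the test equation reads, schematically,
\begin{equation*}
\int\sigma^{\epsilon}\phi\,p_{i}^{\epsilon}\bigl(u^{\epsilon}_{t}-a\Delta_{x}u^{\epsilon}\bigr)\,dxd\xi\,dt \;=\; -\frac{1}{\tau_{\epsilon}}\int\sigma^{\epsilon}\phi\,D_{\xi}u^{\epsilon}\cdot D_{\xi}p_{i}^{\epsilon}\,dxd\xi\,dt\;.
\end{equation*}
By the valley-concentration established above, the left-hand side converges to $(\mu_{i}/\mu)\int_{0}^{T}\!\int_{U}\phi\,(\partial_{t}\alpha_{i}-a_{i}\Delta_{x}\alpha_{i})\,dxdt$, with the factor $a_{i}=a(m_{i})$ emerging from Laplace asymptotics around $m_{i}$.

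The crucial and most delicate step is the limit of the Dirichlet integral on the right. Because $p_{i}^{\epsilon}$ is designed so that its Dirichlet form concentrates on neighborhoods of the saddle points, the integral is localized there and its evaluation requires precise (not merely leading-order) asymptotics. This is exactly what the generalized variational principle of Section \ref{s4} (Theorem \ref{t41}) delivers: each saddle $\sigma\in\mc{S}_{i,j}$ contributes $\kappa_{\sigma}\tau_{\epsilon}\mu^{-1}\bigl((\mu/\mu_{j})\alpha_{j}-(\mu/\mu_{i})\alpha_{i}\bigr)$, up to $o(\tau_{\epsilon})$. Summing over $\sigma$ via \eqref{kappa}, dividing by $\mu_{i}/\mu$, and recalling \eqref{rij} produces exactly the reaction sum $\sum_{j}(r_{j,i}\alpha_{j}-r_{i,j}\alpha_{i})$, completing \eqref{e1}. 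The main obstacle throughout is this saddle-point analysis, which must handle non-symmetric potentials and the possibility of several saddles between a given pair of valleys --- precisely the difficulty that motivates the machinery of Sections \ref{s4}--\ref{s5}.
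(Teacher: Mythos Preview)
Your outline has the right architecture---energy estimates, valley concentration, testing against special $\xi$-profiles---but there is a real gap at the decisive step, namely the limit of the cross-Dirichlet integral
\[
-\frac{1}{\tau_{\epsilon}}\int\sigma^{\epsilon}\,\phi\,D_{\xi}u^{\epsilon}\cdot D_{\xi}p_{i}^{\epsilon}\,dxd\xi\,dt\;.
\]
You propose to evaluate this ``via saddle-point analysis'' and the variational principle. But Theorem~\ref{th41} (the variational principle of Section~\ref{s4}) computes only \emph{quadratic} Dirichlet forms $\int\frac{\sigma^{\epsilon}}{\tau_{\epsilon}}|D\psi|^{2}$ for $\psi$ with \emph{prescribed} constant values on the $\mc{V}_{j}$. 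It says nothing about a bilinear pairing with the unknown $u^{\epsilon}$, whose gradient near the saddles is controlled only in weighted $L^{2}$ by the energy estimate---far too weak to identify the leading-order constant. Describing $p_{i}^{\epsilon}$ as ``essentially the equilibrium potential'' does not close the gap either: the equilibrium potential is $\sigma^{\epsilon}$-harmonic on $\Delta$, so a further integration by parts produces boundary integrals on $\partial\mc{V}_{j}$ involving the equilibrium measure, whose pointwise asymptotics the variational principle does not provide.

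The missing idea---and the whole point of Theorem~\ref{t41} in Section~\ref{s5}, which you conflate with the variational principle---is that the test function $\psi^{\epsilon}$ is built to solve an \emph{exact} elliptic equation
\[
-\textup{div}_{\xi}\Bigl(\tfrac{\sigma^{\epsilon}}{\tau_{\epsilon}}D_{\xi}\psi^{\epsilon}\Bigr)=\sum_{j=1}^{K}\frac{c_{j}}{|\mc{V}_{j}|}\,\chi_{\mc{V}_{j}}\,,\qquad \psi^{\epsilon}\to b_{j}\ \text{on}\ \mc{V}_{j}\,,\quad \mb{c}=\bb{M}\mb{b}\,.
\]
One then integrates by parts a \emph{second} time in $\xi$ to get
\[
-\int\tfrac{\sigma^{\epsilon}}{\tau_{\epsilon}}\,\phi\,D_{\xi}u^{\epsilon}\cdot D_{\xi}\psi^{\epsilon}
=\int \phi\,u^{\epsilon}\,\textup{div}_{\xi}\Bigl(\tfrac{\sigma^{\epsilon}}{\tau_{\epsilon}}D_{\xi}\psi^{\epsilon}\Bigr)
=-\sum_{j}\frac{c_{j}}{|\mc{V}_{j}|}\int_{\mc{V}_{j}}\phi\,u^{\epsilon}\,,
\]
up to cutoff errors at infinity. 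The right-hand side now involves $u^{\epsilon}$ only on the valleys, where its pointwise limit $\alpha_{j}/\widehat{\mu}_{j}$ is already known from \eqref{ee5}; no information about $D_{\xi}u^{\epsilon}$ near the saddles is required. Taking $\mb{b}=\mb{e}_{i}$ then yields the $i$th equation of \eqref{e1}. The variational principle of Section~\ref{s4} enters only indirectly, inside the construction of $\psi^{\epsilon}$, to pin down the limiting valley-values $b_{j}$; it is not the tool that evaluates the cross term. Your proposal skips this second integration by parts and the PDE for $\psi^{\epsilon}$, which is precisely where the argument closes.
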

\begin{rem}\label{rem22}
The weak convergence \eqref{wc} means that for all $f = f(x,\xi,t) \in C(U \times A \times [0,T])$,
\begin{equation*}
\lim_{\epsilon\rightarrow 0}\int_{[0,T]} \int_U \int_A  \rho^\epsilon(x,\xi,t) f(x,\xi,t) d\xi dx dt
=\int_{[0,T]}\int_A \sum_{i=1}^{K} \alpha_i (x,t) f(x,m_i,t) dx dt\;.
\end{equation*}
\end{rem}
\subsection{Graph structure of valleys and an associated Markov chain\label{s23}}

The main result described above is closely related
to a Markov chain on a graph whose vertices are the valleys of potential
$\Phi$.
More precisely, denote by $V=\{1,2,\cdots,K\}$ the set of vertices, in such a way that $i\in V$ corresponds to the valley $\mc{V}_{i}$. Moreover, two vertices
$i,\,j\in V$ are connected by an edge if and only if $\overline{\mc{W}}_{i}\cap\overline{\mc{W}}_{j}\neq\phi$,
or equivalently $\kappa_{i,j}\neq0$. Denote by the $G$ the resulting graph. Since we have assumed that the set $\overline{\mc{W}}_{1}\cup\overline{\mc{W}}_{2}\cup\cdots\cup\overline{\mc{W}}_{K}$
is connected, the graph $G$ is a connected graph.

Let $\{X_{t}:t\ge0\}$ be a Markov chain on $V$ where the jump rate
from $i\in V$ to $j\in V$ is $r_{i,j}$ (cf. \eqref{rij}). Since
$r_{i,j}=0$ if $\kappa_{i,j}=0$, $X_{t}$ becomes a
Markov chain on $G$. Define
\begin{equation*}
\widehat{\mu}_{i}=: \frac{\mu_{i}}{\mu}\; \mbox{for }1\le i\le K\;\;\mbox{ and}\;\;\boldsymbol{\mu}:= (\widehat{\mu}_{1},\,\cdots,\,\widehat{\mu}_{K})\;.
\end{equation*}
Then, observe that the probability measure $\boldsymbol{\mu}$ on $V$
is the invariant measure for the Markov chain $X_{t}$, and furthermore,
the Markov chain is reversible with respect to $\boldsymbol{\mu}$
in the sense that $\widehat{\mu}_{i}r_{i,j}=\widehat{\mu}_{j}r_{j,i}$
for all $i \neq j$. The generator $\mc{L}$ of this Markov
chain can be regarded as a linear operator on $\bb{R}^{K}.$ More precisely, for
$\mb{b}=(b_{1},\cdots,b_{K})\in\bb{R}^{K}$, the $i$th component
of $\mc{L}\mb{b}\in\bb{R}^{K}$ is given by
\begin{equation*}
(\mc{L}\mb{b})_{i}\;=\;\sum_{j=1}^{K}r_{i,j}(b_{j}-b_{i}) \;.
\end{equation*}

\begin{rem}
Assume that $a\equiv0$ so that $\alpha_{i}$, $1\le i\le K$,
is a function of time only. Then, define $\widehat{\alpha}_{i}(t)=\alpha_{i}(t)/\widehat{\mu}_{i}$,
and let $\widehat{\boldsymbol{\alpha}}(t)=(\widehat{\alpha}_{1}(t),\cdots,\widehat{\alpha}_{K}(t))\in\bb{R}^{K}$.
Then, we can deduce from \eqref{e1} that
\begin{equation*}
\frac{d\widehat{\alpha}_{i}}{dt}(t)\;=\;\sum_{j=1}^{K}r_{i,j}(\widehat{\alpha}_{j}(t)-\widehat{\alpha}_{i}(t))=\left(\mc{L}\widehat{\boldsymbol{\alpha}}(t)\right)_{i}\;.
\end{equation*}
Therefore, $\boldsymbol{\alpha}(t)=(\alpha_{1}(t),\cdots,\alpha_{K}(t))$
is the marginal density of the Markov chain $X_{t}$ with respect
to the invariant measure $\boldsymbol{\mu}$, whose starting (possibly deterministic) measure
is $(\alpha_{1}^{0},\cdots,\alpha_{K}^{0})$.
\end{rem}

\section{Preliminary Estimates\label{s3}}

In this section, we state and prove estimates. Denote by
$o_{\epsilon}(1)$ the term vanishing as $\epsilon\rightarrow0$.
\begin{lem}
\label{lem31}We have that
\begin{align}
 &\int_{\mc{V}_{i}}e^{-\Phi(\xi)/\epsilon}d\xi\;=\;
 \left[1+o_{\epsilon}(1)\right]e^{-h/\epsilon}(2\pi\epsilon)^{d/2}\mu_{i}\;;\;\;1\le i\le K\;,\label{e11}\\
 &\int_{\Delta}e^{-\Phi(\xi)/\epsilon}d\xi\;=\;o_{\epsilon}(1)e^{-h/\epsilon}
 \epsilon^{d/2}\;,\;\text{and}\label{e12}\\
& Z_{\epsilon}\;=\;\left[1+o_{\epsilon}(1)\right]e^{-h/\epsilon}(2\pi\epsilon)^{d/2}
\mu\;.\label{e13}
\end{align}
\end{lem}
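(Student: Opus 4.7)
The three estimates are classical Laplace asymptotics; my plan is to prove \eqref{e11} by localization plus a standard quadratic expansion, then deduce \eqref{e12} directly from the exponential tightness hypothesis \eqref{tight}, and finally combine the two for \eqref{e13}.

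For \eqref{e11}, I would fix $1\le i\le K$ and, for small $r>0$, split
\begin{equation*}
\int_{\mc{V}_{i}}e^{-\Phi(\xi)/\epsilon}\,d\xi
\;=\;\int_{B(m_{i},r)}e^{-\Phi/\epsilon}\,d\xi\;+\;\int_{\mc{V}_{i}\setminus B(m_{i},r)}e^{-\Phi/\epsilon}\,d\xi\;.
\end{equation*}
Since $m_{i}$ is the unique global minimum of $\Phi$ on $\overline{\mc{V}}_{i}$ (it is the only critical point of $\Phi$ in $\mc{V}_{i}$ with value $h$, and $\Phi\ge H-\eta$ on $\partial\mc{V}_{i}$), there is $\delta(r)>0$ with $\Phi\ge h+\delta(r)$ on $\mc{V}_{i}\setminus B(m_{i},r)$, so the outer piece is $O(e^{-(h+\delta(r))/\epsilon})$, i.e., negligible compared to $e^{-h/\epsilon}\epsilon^{d/2}$. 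On $B(m_{i},r)$ I would Taylor expand $\Phi(\xi)=h+\tfrac{1}{2}\langle (D^{2}\Phi)(m_{i})(\xi-m_{i}),\xi-m_{i}\rangle+O(|\xi-m_{i}|^{3})$ and change variables $\xi=m_{i}+\sqrt{\epsilon}\,y$. Using positive-definiteness of $(D^{2}\Phi)(m_{i})$ together with dominated convergence on the resulting Gaussian integral, the main term becomes
\begin{equation*}
e^{-h/\epsilon}\epsilon^{d/2}\int_{\bb{R}^{d}}e^{-\frac12\langle (D^{2}\Phi)(m_{i})y,y\rangle}\,dy\cdot[1+o_\epsilon(1)]\;=\;[1+o_\epsilon(1)]\,e^{-h/\epsilon}(2\pi\epsilon)^{d/2}\mu_{i}\;,
\end{equation*}
after recalling $\mu_{i}=1/\sqrt{\det(D^{2}\Phi)(m_{i})}$ and the standard Gaussian formula. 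Letting $r\to 0$ after $\epsilon\to 0$ makes the cubic remainder uniformly negligible.

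For \eqref{e12}, the key observation is that on $\Delta$ one has $\Phi\ge H-\eta$: indeed any point of $\Delta$ either lies in some $\overline{\mc{W}}_{i}$ with $\Phi\ge H-\eta$ (by definition \eqref{e01}), or lies outside $\bigcup_{i}\overline{\mc{W}}_{i}$, where $\Phi\ge H$. Hence by the exponential tightness \eqref{tight},
\begin{equation*}
\int_{\Delta}e^{-\Phi/\epsilon}\,d\xi\;\le\;\int_{\{\Phi\ge H-\eta\}}e^{-\Phi/\epsilon}\,d\xi\;\le\;C(H-\eta)\,e^{-(H-\eta)/\epsilon}\;=\;e^{-h/\epsilon}\epsilon^{d/2}\cdot O\bigl(\epsilon^{-d/2}e^{-(H-h-\eta)/\epsilon}\bigr)\;,
\end{equation*}
and since $H-h-\eta>0$ was chosen strictly positive, the last factor is $o_{\epsilon}(1)$. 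Finally, \eqref{e13} follows immediately by writing $Z_{\epsilon}=\sum_{i=1}^{K}\int_{\mc{V}_{i}}e^{-\Phi/\epsilon}\,d\xi+\int_{\Delta}e^{-\Phi/\epsilon}\,d\xi$, summing \eqref{e11} over $i$ (which produces $\mu=\sum_{i}\mu_{i}$) and absorbing \eqref{e12} into the $o_{\epsilon}(1)$ error.

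The only technical point that needs care is the uniform control of the cubic Taylor remainder in the Laplace expansion; every other step reduces to definitions or a one-line bound from \eqref{tight}, so I would expect the whole lemma to occupy less than a page.
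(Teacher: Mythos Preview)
Your proposal is correct and follows exactly the approach the paper indicates: Laplace's method for \eqref{e11}, the exponential tightness hypothesis \eqref{tight} for \eqref{e12}, and summing for \eqref{e13}. The paper's own proof is a three-line sketch that simply names these ingredients, so your write-up is a faithful (and more detailed) expansion of it.
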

\begin{proof}
The proof of \eqref{e11} is an easy consequence of Laplace's method.
The estimate \eqref{e12} is a direct consequence of \eqref{tight}.
Finally, \eqref{e13} follows immediately from \eqref{e11} and \eqref{e12}
because of the definition of $Z_{\epsilon}$ (cf. \eqref{e0011}). \end{proof}
\begin{lem}
\label{lem32}For $\mc{A}\subset\bb{R}^{d}$, suppose that
there exists $c>0$ such that $\Phi(\xi)\ge H+c$ for all $\xi\in\mc{A}$.
Then,
\begin{equation*}
\int_{\mc{A}}\frac{\sigma^{\epsilon}}{\tau_{\epsilon}}d\xi\;=\;o_{\epsilon}(1)\;.
\end{equation*}
\end{lem}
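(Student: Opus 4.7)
The plan is to unfold the definitions, use the exponential tightness assumption to bound the integrand over $\mc{A}$, and compare the exponential factors against the asymptotic size of $Z_\epsilon$ from Lemma \ref{lem31}. The key observation is that the condition $\Phi \geq H + c$ on $\mc{A}$ gains us an extra factor $e^{-c/\epsilon}$, which dominates all the polynomial-in-$\epsilon$ prefactors coming from the normalization constant $Z_\epsilon$ and from $\tau_\epsilon$.

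More concretely, first I would rewrite
\begin{equation*}
\int_{\mc{A}} \frac{\sigma^\epsilon}{\tau_\epsilon}\, d\xi
\;=\; \frac{\epsilon\, e^{(H-h)/\epsilon}}{Z_\epsilon} \int_{\mc{A}} e^{-\Phi(\xi)/\epsilon}\, d\xi
\end{equation*}
using the definitions \eqref{e001} and \eqref{e0011}. Next, since $\mc{A} \subset \{\xi : \Phi(\xi) \geq H+c\}$, the tightness assumption \eqref{tight} gives
\begin{equation*}
\int_{\mc{A}} e^{-\Phi(\xi)/\epsilon}\, d\xi \;\leq\; C(H+c)\, e^{-(H+c)/\epsilon}.
\end{equation*}
Meanwhile, Lemma \ref{lem31} provides the lower bound $Z_\epsilon \geq \frac{1}{2}(2\pi\epsilon)^{d/2}\mu\, e^{-h/\epsilon}$ for $\epsilon$ small enough.

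Combining these two estimates cancels $e^{(H-h)/\epsilon}\cdot e^{h/\epsilon} = e^{H/\epsilon}$ against $e^{-(H+c)/\epsilon}$, leaving $e^{-c/\epsilon}$ multiplied by a factor of order $\epsilon^{1-d/2}$. Since $e^{-c/\epsilon}$ decays faster than any power of $\epsilon$, the product is $o_\epsilon(1)$, completing the proof.

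There is no substantive obstacle here: this is a bookkeeping argument. The only mild subtlety is making sure the $Z_\epsilon$ estimate from Lemma \ref{lem31} is stated as a two-sided bound (so one can legitimately bound $1/Z_\epsilon$ from above), but this is immediate from the $[1+o_\epsilon(1)]$ form of \eqref{e13}.
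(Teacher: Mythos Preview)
Your proposal is correct and follows essentially the same approach as the paper: both unfold the definitions to write $\sigma^\epsilon/\tau_\epsilon$ in terms of $Z_\epsilon$, invoke the asymptotic \eqref{e13} for $Z_\epsilon$, and then apply the tightness assumption \eqref{tight} to extract the extra factor $e^{-c/\epsilon}$ that beats the polynomial prefactor. The paper's write-up is just slightly terser, collapsing your steps into the single pointwise identity \eqref{e14}.
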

\begin{proof}
By \eqref{e13},
\begin{equation}
\frac{\sigma^{\epsilon}}{\tau_{\epsilon}}\;=\;\left[1+o_{\epsilon}(1)\right]\,
\frac{\epsilon}{(2\pi\epsilon)^{d/2}\mu}\,e^{(H-\Phi)/\epsilon}\;.\label{e14}
\end{equation}
Hence, the lemma immediately follows from \eqref{tight}.
\end{proof}
Now we establish several compactness estimates similar to \cite[Section 3]{ET}.
Let
\begin{equation*}
u^{\epsilon}(x,\xi,t):=\frac{\rho^{\epsilon}(x,\xi,t)}{\sigma^{\epsilon}(\xi)}\;.
\end{equation*}
Then, by \eqref{p01}, the $u^{\epsilon}$ satisfies
\begin{equation}
u_{t}^{\epsilon}-a\,\Delta_{x}u^{\epsilon}\;=\;\frac{1}{\sigma^{\epsilon}}
\,\textup{div}_{\xi}\left[\frac{\sigma^{\epsilon}}{\tau_{\epsilon}}\,
D_{\xi}u^{\epsilon}\right]\;.\label{p02}
\end{equation}
The next lemma is an energy estimate that is similar to that of \cite[Lemma 3.1]{ET}. However, instead of skipping the proof, we refer the readers to the Appendix, since the notation here is more involved than \cite{ET}.
\begin{lem}
\label{lem21}For some constant $C>0$, we have the bound
\begin{equation}
0\le u^{\epsilon}\le C\;\;\mbox{on }U\times\bb{R}^{d}\times[0,T]\label{ub}
\end{equation}
and the energy estimate
\begin{multline}
\label{uc}
\sup_{0\le t\le T}\int_{\bb{R}^{d}}\int_{U}\left(|u^{\epsilon}|^{2}+|D_{x}u^{\epsilon}|^{2}
+\tau_{\epsilon}^{-1}\left|D_{\xi}u^{\epsilon}\right|^{2}\right)\sigma^{\epsilon}
\,dxd\xi \\
+\int_{0}^{T}\int_{\bb{R}^{d}}\int_{U}|u_{t}^{\epsilon}|^{2}\,
\sigma^{\epsilon}\,dxd\xi dt  \;\le\;  C\;.
\end{multline}
\end{lem}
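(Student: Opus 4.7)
The plan is to mimic the energy method used in \cite[Lemma 3.1]{ET}, adapted to the multi-well setting in which $\xi$ ranges over $\bb{R}^d$ and the reference density is $\sigma^\epsilon$. I would first establish the pointwise bound \eqref{ub}, and then split \eqref{uc} into two pieces: the $L^2$-in-time control on $u^\epsilon$ together with the time-integrated dissipation obtained by testing \eqref{p02} against $u^\epsilon\sigma^\epsilon$, and the sup-in-time control of the two gradient terms together with the $L^2_t$-bound on $u_t^\epsilon$ obtained by testing against $u_t^\epsilon\sigma^\epsilon$.

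For \eqref{ub}, I would apply the weak maximum principle directly to \eqref{p02}, which is uniformly parabolic on bounded subsets of $U\times\bb{R}^d$ with smooth coefficients. Constants are stationary solutions, the Neumann condition on $\partial U$ produces no boundary contribution, and decay as $|\xi|\to\infty$ is ensured by \eqref{init}. Since $u_0^\epsilon$ is bounded and $\rho_0^\epsilon\ge 0$ forces $u_0^\epsilon\ge 0$, the two-sided bound $0\le u^\epsilon\le\|u_0^\epsilon\|_{L^\infty}$ follows.

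For the energy identities, I would multiply \eqref{p02} first by $u^\epsilon\sigma^\epsilon$ and then by $u_t^\epsilon\sigma^\epsilon$, integrate over $U\times\bb{R}^d$, and carry out the two integrations by parts: the Neumann condition in $x$ kills the boundary term, while in $\xi$ the divergence structure converts the right-hand side into the dissipation $\int(\sigma^\epsilon/\tau_\epsilon)|D_\xi u^\epsilon|^2$. Using that $a$ does not depend on $t$, the two identities read, schematically,
\begin{equation*}
\frac{1}{2}\frac{d}{dt}\int|u^\epsilon|^2\sigma^\epsilon
+\int a|D_xu^\epsilon|^2\sigma^\epsilon
+\int\frac{\sigma^\epsilon}{\tau_\epsilon}|D_\xi u^\epsilon|^2
=-\int u^\epsilon(D_xa\cdot D_xu^\epsilon)\sigma^\epsilon
\end{equation*}
and
\begin{equation*}
\int|u_t^\epsilon|^2\sigma^\epsilon
+\frac{1}{2}\frac{d}{dt}\left[\int a|D_xu^\epsilon|^2\sigma^\epsilon
+\int\frac{\sigma^\epsilon}{\tau_\epsilon}|D_\xi u^\epsilon|^2\right]
=-\int u_t^\epsilon(D_xa\cdot D_xu^\epsilon)\sigma^\epsilon.
\end{equation*}
Cauchy--Schwarz absorbs a small fraction of the dissipative term (respectively of $\int|u_t^\epsilon|^2\sigma^\epsilon$), while the initial-energy bound \eqref{init} supplies a starting value at $t=0$. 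Gronwall applied to the first identity yields $\sup_t\int|u^\epsilon|^2\sigma^\epsilon$ and the time-integrated $L^2_\sigma$-dissipation; inserting this into the second identity integrated from $0$ to $T$ produces the remaining pieces of \eqref{uc}.

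The main technical obstacle is justifying the integration by parts in $\xi$ on the unbounded space $\bb{R}^d$. I would handle this by first carrying out the computation against a smooth cutoff $\chi_R(\xi)$ supported in $\{|\xi|\le R\}$ and then letting $R\to\infty$; the exponential tightness \eqref{tight}, Lemma \ref{lem32}, the pointwise bound \eqref{ub}, and the finite-initial-energy assumption \eqref{init} together ensure by dominated convergence that the cutoff contributions vanish in the limit. The remaining work is routine bookkeeping of constants.
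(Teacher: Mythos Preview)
Your proposal is correct and follows essentially the same route as the paper's Appendix: the maximum principle for \eqref{ub}, then testing \eqref{p02} against $u^\epsilon\sigma^\epsilon$ and $u_t^\epsilon\sigma^\epsilon$ and integrating by parts in $x$ and $\xi$ to obtain the two energy identities that combine into \eqref{uc}. Your version is in fact slightly more careful on two points that the paper's proof elides: you keep track of the cross term $\int u^\epsilon(D_xa\cdot D_xu^\epsilon)\sigma^\epsilon$ arising in the $x$--integration by parts (the Appendix drops it, which is harmless if $a$ is regarded as a function of $\xi$, but otherwise requires exactly the Cauchy--Schwarz/Gronwall absorption you describe), and you explicitly justify the $\xi$--integration by parts on $\bb{R}^d$ via a cutoff $\chi_R$ and Lemma~\ref{lem32}, whereas the paper simply asserts that there are no boundary terms. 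Neither addition changes the structure of the argument.
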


Define $U_{T}=U\times(0,T)$. We next develop some pre-compactness results similar to \cite[Lemmas 3.2 and 3.3]{ET}. Again, proofs can be found in the Appendix since they are more involved.

\begin{lem}
\label{lem22}There exist a sequence $\{\epsilon_{n}\}_{n=1}^{\infty}$
of positive real numbers converging to $0$ and  functions
$\alpha_{1},\,\alpha_{2},\,\cdots,\,\alpha_{K}\in H^{1}(U_{T})$ that satisfy the following:
\begin{enumerate}
\item For all $1\le i\le K$, we have that, as $n\rightarrow\infty$,
\begin{align}
&\int_{\mc{V}_{i}} \int_{U} \rho^{\epsilon_{n}}(x,\xi,t)\,dxd\xi\,\rightharpoonup\,\alpha_{i}(x,t)\mbox{ \;weakly in }L^{2}(U_{T})\;\mbox{and}\label{ee1}\\
& \sup_{0\le t\le T}\int_{\Delta}|\rho^{\epsilon_{n}}(x,\xi,t)|\,d\xi\,\rightarrow\,0\;.\label{ee2}
\end{align}
\item For all $1\le i\le K$, we have that, as $n\rightarrow\infty$,
\begin{align}
&\int_{\mc{V}_{i}}\partial_{t}\rho^{\epsilon_{n}}(x,\xi,t)\,d\xi  \,\rightharpoonup\,  \partial_{t}\alpha_{i}(x,t)\mbox{ \; weakly in }L^{2}(U_{T})\;,\label{ee3}\\
&\int_{\mc{V}_{i}}D_{x}\rho^{\epsilon_{n}}(x,\xi,t)\,d\xi  \,\rightharpoonup  \, D_{x}\alpha_{i}(x,t)\mbox{ \; weakly in }L^{2}(U_{T})\;,\label{ee31}\\
&\int_{\Delta}\int_{U}|\partial_{t}\rho^{\epsilon_{n}}(x,\xi,t)|\,dxd\xi  \,\rightarrow\, 0\;\;\; \mbox{strongly in \ensuremath{L^{2}(0,T)}}\;,\;\mbox{and}\label{ee32}\\
& \sup_{0\le t\le T}\int_{\Delta}\int_{U}\left|D_{x}\rho^{\epsilon_{n}}(x,\xi,t)\right|\,
dxd\xi\,\rightarrow\,0\;.\label{ee4}
\end{align}
\item For all $t\in[0,T]$, for all $1\le i\le K$, and almost every $x\in U$,
we have that, as $\epsilon\rightarrow0$,
\begin{equation}
u^{\epsilon}(x,\xi,t)\,\rightarrow\,\frac{\alpha_{i}(x,t)}
{\widehat{\mu}_{i}}\;\;\mbox{for almost every }\xi\in\mc{V}_{i}\;.\label{ee5}
\end{equation}
\end{enumerate}
\end{lem}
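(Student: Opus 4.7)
The plan is to deduce everything from an $H^{1}$-type compactness for the $\xi$-averaged density $F_{i}^{\epsilon}(x,t):=\int_{\mc{V}_{i}}\rho^{\epsilon}(x,\xi,t)\,d\xi$, combined with the exponential smallness of $\sigma^{\epsilon}$ on $\Delta$ and a weighted Poincaré inequality on each valley. Writing $\rho^{\epsilon}=u^{\epsilon}\sigma^{\epsilon}$, Cauchy--Schwarz in $\xi$ together with Lemma \ref{lem31} yields
\[
|F_{i}^{\epsilon}(x,t)|^{2}\;\le\;\Bigl(\int_{\mc{V}_{i}}\sigma^{\epsilon}d\xi\Bigr)\int_{\mc{V}_{i}}|u^{\epsilon}|^{2}\sigma^{\epsilon}d\xi\;\le\;C\int_{\mc{V}_{i}}|u^{\epsilon}|^{2}\sigma^{\epsilon}d\xi,
\]
and the same bound with $u^{\epsilon}$ replaced by $u_{t}^{\epsilon}$ or $D_{x}u^{\epsilon}$. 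The energy estimate \eqref{uc} therefore gives $\{F_{i}^{\epsilon}\}$ bounded in $H^{1}(U_{T})$ uniformly in $\epsilon$. Rellich--Kondrachov, applied together with a diagonal extraction over $i=1,\ldots,K$, then produces a single sequence $\epsilon_{n}\to 0$ along which $F_{i}^{\epsilon_{n}}\to\alpha_{i}$ strongly in $L^{2}(U_{T})$ with $\alpha_{i}\in H^{1}(U_{T})$, while $\partial_{t}F_{i}^{\epsilon_{n}}$ and $D_{x}F_{i}^{\epsilon_{n}}$ converge weakly in $L^{2}(U_{T})$ to $\partial_{t}\alpha_{i}$ and $D_{x}\alpha_{i}$. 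This yields \eqref{ee1}, \eqref{ee3} and \eqref{ee31}.

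For the $\Delta$-contributions, note that $\Phi\ge H-\eta$ on $\Delta$, so \eqref{tight} together with \eqref{e13} gives $\int_{\Delta}\sigma^{\epsilon}d\xi\le C\epsilon^{-d/2}e^{-(H-h-\eta)/\epsilon}\to 0$. Combined with the pointwise bound $0\le u^{\epsilon}\le C$ from \eqref{ub}, this immediately yields \eqref{ee2} uniformly in $(x,t)$. For \eqref{ee32} and \eqref{ee4}, I would split the $\xi$-integrals by Cauchy--Schwarz as $(\int_{\Delta}\sigma^{\epsilon}d\xi)^{1/2}\cdot(\int_{\Delta}|u_{t}^{\epsilon}|^{2}\sigma^{\epsilon}d\xi)^{1/2}$ (and analogously for $D_{x}u^{\epsilon}$), integrate in $x$ by another Cauchy--Schwarz, and invoke \eqref{uc}; the $L^{2}(0,T)$ and $L^{\infty}(0,T)$ norms then tend to $0$ because the first factor does so.

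The crucial step, and the main obstacle, is the pointwise identification \eqref{ee5}. Here I would apply a weighted Poincaré inequality on each valley: since $m_{i}$ is the unique critical point of $\Phi$ in $\mc{V}_{i}$ and $\Phi$ is strictly larger on $\partial\mc{V}_{i}$, there exists $C_{i}>0$ such that
\[
\int_{\mc{V}_{i}}\bigl|u^{\epsilon}-\overline{u}_{i}^{\epsilon}\bigr|^{2}\sigma^{\epsilon}d\xi\;\le\;C_{i}\,\epsilon\int_{\mc{V}_{i}}|D_{\xi}u^{\epsilon}|^{2}\sigma^{\epsilon}d\xi,
\]
where $\overline{u}_{i}^{\epsilon}(x,t):=F_{i}^{\epsilon}(x,t)\,/\int_{\mc{V}_{i}}\sigma^{\epsilon}d\xi$ is the $\sigma^{\epsilon}$-weighted mean of $u^{\epsilon}$ over $\mc{V}_{i}$. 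The energy estimate gives $\int_{U_{T}}\int_{\mc{V}_{i}}|D_{\xi}u^{\epsilon}|^{2}\sigma^{\epsilon}\le C\tau_{\epsilon}$, so the right-hand side is bounded by $C\epsilon\tau_{\epsilon}=Ce^{-(H-h)/\epsilon}\to 0$. On the other hand, $\overline{u}_{i}^{\epsilon}\to\alpha_{i}/\widehat{\mu}_{i}$ by the first paragraph and Lemma \ref{lem31}. A further subsequence extraction upgrades the resulting $L^{2}(\sigma^{\epsilon})$-convergence to the almost-everywhere statement \eqref{ee5}. Justifying the $O(\epsilon)$ Poincaré constant for the measure $\sigma^{\epsilon}\rvert_{\mc{V}_{i}}$ is the delicate point: it can be obtained by a Bakry--Emery argument in a convex neighborhood of $m_{i}$ patched with a classical Poincaré inequality on the remaining compact region bounded away from $m_{i}$, or directly from the spectral-gap estimates of \cite{BEGK}.
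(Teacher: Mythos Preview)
Your treatment of parts (1) and (2) is essentially identical to the paper's: the paper also defines $\alpha_i^\epsilon(x,t)=\int_{\mc{V}_i}\rho^\epsilon\,d\xi$, bounds it in $H^1(U_T)$ by the same Cauchy--Schwarz against $\sigma^\epsilon$, and extracts a weakly convergent subsequence; the $\Delta$-estimates \eqref{ee2}, \eqref{ee32}, \eqref{ee4} are obtained by the same splitting $\rho^\epsilon=(|u^\epsilon|^2\sigma^\epsilon)^{1/2}(\sigma^\epsilon)^{1/2}$ and the smallness of $\int_\Delta\sigma^\epsilon$.

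For part (3), however, the paper takes a much more elementary route than your weighted Poincar\'e argument. Rather than controlling $u^\epsilon-\overline{u}_i^\epsilon$ in $L^2(\sigma^\epsilon)$, the paper shows directly that the \emph{unweighted} $L^1$ gradient vanishes:
\[
\int_{\mc{V}_i}\int_U |D_\xi u^\epsilon|\,dx\,d\xi
\;\le\;\Bigl(\int\frac{\sigma^\epsilon}{\tau_\epsilon}|D_\xi u^\epsilon|^2\Bigr)^{1/2}
\Bigl(\int_{\mc{V}_i}\frac{\tau_\epsilon}{\sigma^\epsilon}\,d\xi\Bigr)^{1/2},
\]
the first factor being bounded by \eqref{uc} and the second tending to $0$ because $\Phi\le H-\eta$ on $\mc{V}_i$ gives $\int_{\mc{V}_i}\tau_\epsilon/\sigma^\epsilon\le C\epsilon^{d/2-1}e^{-\eta/\epsilon}$. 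This immediately forces $u^\epsilon$ to converge a.e.\ to a $\xi$-independent limit, which is then identified with $\alpha_i/\widehat{\mu}_i$ via \eqref{ee1} and Lemma~\ref{lem31}. No spectral gap is needed.

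Your approach is not wrong, but as written it has a gap: convergence to $0$ in $L^2(\sigma^\epsilon)$ with $\sigma^\epsilon$ concentrating at $m_i$ does \emph{not} by itself yield Lebesgue-a.e.\ convergence on $\mc{V}_i$; a subsequence extraction in a fixed $L^2$ space is what gives a.e.\ convergence, and here the measure is changing. To close your argument you must insert the lower bound $\sigma^\epsilon\ge c\,\epsilon^{-d/2}e^{-(H-\eta-h)/\epsilon}$ on $\mc{V}_i$ to pass to unweighted $L^2$, after which the bound becomes $C\epsilon^{d/2}e^{-\eta/\epsilon}\to 0$. Once you do that, you have essentially reproduced the paper's estimate by a longer path; the $O(\epsilon)$ Poincar\'e constant and the Bakry--Emery patching are unnecessary overhead compared to the single Cauchy--Schwarz above.
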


\section{A variational Problem\label{s4}}

Throughout the rest of the paper, elements of $\bb{R}^{K}$ are denoted by bold lower-case letters such as $\mb{a=}(a_{1},\cdots,a_{K})$, and subsets of $\bb{R}^{K}$
are denoted by bold capital letters like $\mb{A}$ and $\mb{B}$.

Define $\mc{D}:\bb{R}^{K}\rightarrow\bb{R}$
by
\begin{equation}
\mc{D}(\mb{b})\;=\;\frac{1}{2\mu}\sum_{i,j=1}^{K}\kappa_{i,j}(b_{j}-b_{i})^{2}
\;\;;\;\mb{b}\in\bb{R}^{K}\;.\label{diri}
\end{equation}
Note that $\mc{D}(\mb{b})=0$ implies $b_{1}=b_{2}=\cdots=b_{K}$
since the graph $G$ is connected.
\begin{rem}
The function $\mc{D}$ is the so-called Dirichlet form associated
with the generator $\mc{L}$ defined in Section \ref{s23}. More
precisely, we can write
\begin{equation*}
\mc{D}(\mb{b})\;=\;\sum_{i=1}^{K}\widehat{\mu}_{i}\mb{b}_{i}(-\mc{L}\mb{b})_{i}\;.
\end{equation*}
\end{rem}
For $\mb{b}=(b_1,\,\cdots,\,b_K)\in\bb{R}^{K}$, define
\begin{equation}
\mathscr{F}_{\mb{b}}=\left\{ \psi\in H_{1}(\bb{R}^{d}):\psi\big|_{\mc{V}_{i}}\equiv b_{i}\;\mbox{for all}\;1\le i\le K\right\} \;.\label{fb}
\end{equation}
In the current and the next section, we only consider functions on
$\bb{R}^{d}$, that is only depending on $\xi$ and independent of the variable $x$. Hence, for a function $\phi:\mathbb{R}^d\rightarrow \mathbb{R}$, the notations
  $D\phi$ and $\Delta\phi$ are used to
represent $D_{\xi}\phi$ and $\Delta_{\xi}\phi$, respectively. Then the following result is a generalization of \cite[Theorem 3.1]{BEGK}.
\begin{theorem}
\label{th41}For any $\mb{b}\in\bb{R}^{K}$, we have that
\begin{equation}
\inf_{\varphi\in\mathscr{F}_{\mb{b}}}\int_{\bb{R}^{d}}
\frac{\sigma^{\epsilon}}{\tau_{\epsilon}}|D\psi|^{2}d\xi\;=\;
[1+o_{\epsilon}(1)]\,\mc{D}(\mb{b})\;.\label{var2}
\end{equation}
\end{theorem}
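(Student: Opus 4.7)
The plan is to prove the identity in Theorem \ref{th41} via matching upper and lower bounds, following the classical capacity approach of \cite{BEGK} adapted to the weight $\sigma^\epsilon/\tau_\epsilon$. The upper bound comes from an explicit test function built from one-dimensional profiles across each saddle; the lower bound uses a localization argument near the saddles combined with Cauchy--Schwarz in the unstable direction and Gaussian integration in the transverse ones.

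\smallskip
\textbf{Upper bound.} Choose $\delta>0$ so small that the balls $B_\delta(\sigma)$, $\sigma\in\mc{S}$, are pairwise disjoint and contained in $\{\Phi>H-\eta\}$. Near each saddle $\sigma\in\mc{S}_{i,j}$ introduce Morse coordinates $y=(y_1,\dots,y_d)$ with $y_1$ along the unstable direction, oriented so that increasing $y_1$ goes from $\mc{V}_i$ to $\mc{V}_j$, and write $\Phi(\sigma+y)=H-\tfrac12\lambda_\sigma y_1^2+\tfrac12\sum_{k\ge 2}\nu_k^\sigma y_k^2+O(|y|^3)$ with $\nu_k^\sigma>0$. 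Set
\begin{equation*}
f_\sigma(y_1)\;=\;\frac{\int_{-\delta}^{y_1}e^{\lambda_\sigma s^2/(2\epsilon)}ds}{\int_{-\delta}^{\delta}e^{\lambda_\sigma s^2/(2\epsilon)}ds},\qquad \psi_\sigma(y)\;=\;b_i+(b_j-b_i)f_\sigma(y_1),
\end{equation*}
inside $B_\delta(\sigma)$; this $f_\sigma$ is the minimizer of the one-dimensional weighted energy with the correct boundary values. Outside the balls but still in $\Delta$, glue the $\psi_\sigma$ to the constants $b_\ell$ on $\mc{V}_\ell$ through smooth cutoffs supported in $\{\Phi\ge H-\epsilon_0\}$ for a fixed $\epsilon_0\in(0,\eta)$. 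A Laplace expansion over $B_\delta(\sigma)$, using the Euler--Lagrange identity for $f_\sigma$ together with
\begin{equation*}
\int_{-\delta}^{\delta}e^{-\lambda_\sigma y_1^2/(2\epsilon)}\,dy_1\;=\;[1+o_\epsilon(1)]\sqrt{\tfrac{2\pi\epsilon}{\lambda_\sigma}}\quad\text{and}\quad \int e^{-\sum\nu_k^\sigma y_k^2/(2\epsilon)}dy'\;=\;[1+o_\epsilon(1)]\tfrac{(2\pi\epsilon)^{(d-1)/2}}{\sqrt{\prod\nu_k^\sigma}},
\end{equation*}
combined with $\sigma^\epsilon/\tau_\epsilon=[1+o_\epsilon(1)]\tfrac{\epsilon}{(2\pi\epsilon)^{d/2}\mu}e^{(H-\Phi)/\epsilon}$ from \eqref{e14}, reproduces $[1+o_\epsilon(1)]\kappa_\sigma(b_j-b_i)^2/\mu$. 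The contribution from the cutoff region is $o_\epsilon(1)$ by Lemma \ref{lem32}, and summing over all saddles using \eqref{kappa} yields $\leq [1+o_\epsilon(1)]\mc{D}(\mb{b})$.

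\smallskip
\textbf{Lower bound.} For an arbitrary $\psi\in\mathscr{F}_{\mb{b}}$, first restrict the integral to $\bigcup_{\sigma}B_\delta(\sigma)$: in the complement of these balls within $\Delta$ we have $\Phi\ge H+c$ for some $c>0$ (since the only critical points of $\Phi$ in $\{\Phi\ge H-\eta\}$ are the saddles), and Lemma \ref{lem32} shows that this region is irrelevant. Inside each ball, use the Morse coordinates above and apply Cauchy--Schwarz in $y_1$:
\begin{equation*}
\bigl(\psi(\delta,y')-\psi(-\delta,y')\bigr)^2\;\le\;\Bigl(\int_{-\delta}^{\delta}e^{-\lambda_\sigma y_1^2/(2\epsilon)}dy_1\Bigr)\Bigl(\int_{-\delta}^{\delta}e^{\lambda_\sigma y_1^2/(2\epsilon)}|\partial_{y_1}\psi|^2 dy_1\Bigr).
\end{equation*}
Using $|D\psi|^2\ge|\partial_{y_1}\psi|^2$, multiplying by $\tfrac{\epsilon}{(2\pi\epsilon)^{d/2}\mu}e^{-\sum\nu_k^\sigma y_k^2/(2\epsilon)}$ and integrating the transverse Gaussian with Laplace's method produces exactly $[1+o_\epsilon(1)]\tfrac{\kappa_\sigma}{\mu}(b_j-b_i)^2$, provided the traces $\psi(\pm\delta,y')$ are close to $b_i,b_j$ in a weighted $L^2$ sense on the faces $\{y_1=\pm\delta\}$. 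Summing the saddle contributions again gives $\ge [1+o_\epsilon(1)]\mc{D}(\mb{b})$.

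\smallskip
\textbf{Main obstacle.} The hardest step is justifying the boundary-trace claim in the lower bound: admissible $\psi$'s are only forced to equal $b_\ell$ on $\mc{V}_\ell\subset\{\Phi<H-\eta\}$, so an intermediate region $\{H-\eta\le\Phi\le H-\epsilon_0\}$ separates each valley from the face of $B_\delta(\sigma)$, and $\psi$ is a priori free there. I would handle this via a weighted Poincar\'e inequality on each connected component of that intermediate level set, using that the measure $\sigma^\epsilon$ is concentrated away from this transition region while the total Dirichlet energy is bounded; this forces $\psi$ to be essentially constant between $\mc{V}_\ell$ and the face of $B_\delta(\sigma)$ up to $o_\epsilon(1)$. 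Making this argument work uniformly in the non-symmetric multi-well setting, and matching it with the explicit capacity expression at each saddle, is precisely the generalization of \cite{ET} and the main content of this section.
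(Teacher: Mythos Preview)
Your outline is essentially the direct capacity computation of \cite{BEGK}, redone for general boundary data $\mb{b}$, and with the acknowledged trace difficulty it would go through. The paper, however, takes a much shorter route that avoids reproving any of this. It observes that the minimizer $\varphi_{\mb{b}}^{\epsilon}$ of the variational problem solves a linear Euler--Lagrange equation, so by uniqueness $\varphi_{\mb{b}}^{\epsilon}=\sum_i b_i\,\varphi_{\mb{e}_i}^{\epsilon}$ and the minimal energy is a quadratic form in $\mb{b}$. That quadratic form is determined by its values on $\mb{e}_i$ and on $\mb{e}_i+\mb{e}_j$; but these are exactly the one-set and two-set capacities computed in \cite[Theorem~3.1]{BEGK}, namely \eqref{vm2} and \eqref{vm3}. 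Polarizing gives the off-diagonal term \eqref{vm4}, and summing reproduces $\mc{D}(\mb{b})$. So the paper's proof is a few lines of linear algebra plus a citation, while yours is a self-contained reconstruction of the cited result. Your approach buys independence from \cite{BEGK}; theirs buys brevity and sidesteps entirely the boundary-trace obstacle you flagged.

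One small correction in your lower-bound sketch: the claim that in $\Delta\setminus\bigcup_\sigma B_\delta(\sigma)$ one has $\Phi\ge H+c$ is false---along the unstable direction just outside $B_\delta(\sigma)$ the potential dips below $H$. This does not matter, since for a lower bound you simply drop the non-negative integrand there; the appeal to Lemma~\ref{lem32} is unnecessary at that step.
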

\begin{proof}
By \eqref{e14} and definition of $\mc{D}(\cdot)$ we can rewrite the identity
\eqref{var2} as
\begin{equation}
\inf_{\varphi\in\mathscr{F}_{\mb{b}}}
\epsilon\int_{\bb{R}^{d}}e^{-\Phi/\epsilon}|D\psi|^{2}d\xi\;=\;
[1+o_{\epsilon}(1)]\,e^{-H/\epsilon}\,
\frac{(2\pi\epsilon)^{d/2}}{2}\sum_{i,j=1}^{K}\kappa_{i,j}(b_{j}-b_{i})^{2}\;.
\label{vm1}
\end{equation}
Denote by $\varphi_{\mb{b}}^{\epsilon}$ the minimizer of the
left-hand-side. Then, $\varphi_{\mb{b}}^{\epsilon}$ solves the
following Euler-Lagrange equation:
\begin{equation*}
     \mbox{div}\left[e^{-\Phi/\epsilon}\,D\varphi_{\mb{b}}^{\epsilon}\right]\,=\,0
     \;\mbox{on}\;\Delta\;\; \mbox{ and } \; \;  \varphi_{\mb{b}}^{\epsilon}\,=\,b_{i}\;\mbox{on\;}\mc{V}_{i}\;\mbox{for all}\;1\le i\le K\;.
\end{equation*}
For $1\le i\le d$, write $\mb{e}_{i}=(0,\cdots,0,1,0,\cdots,0)$
the $i$th standard basis vector of $\bb{R}^{d}$. Then, by linearity and uniqueness of the Euler-Lagrange equation, it follows that
\begin{equation}
\varphi_{\mb{b}}^{\epsilon}\;=\;\sum_{i=1}^{d}b_{i}\varphi_{\mb{e}_{i}}^{\epsilon}\;.\label{cm1}
\end{equation}
Therefore, we can write
\begin{multline}\label{cham}
\epsilon\int_{\bb{R}^{d}}e^{-\Phi/\epsilon}\,|D\varphi_{\mb{b}}^{\epsilon}|^{2}
\,d\xi\;=\;
 \sum_{i=1}^K\, b_i^2 \,\epsilon\int_{\bb{R}^{d}}e^{-\Phi/\epsilon}|
D\varphi_{\mb{e}_{i}}^{\epsilon}|^{2}\,d\xi
\\
 +\,
\sum_{1\le i\neq j \le K} b_i \,b_j \,\epsilon\int_{\bb{R}^{d}}e^{-\Phi/\epsilon}|\,
D(\varphi_{\mb{e}_{i}}^{\epsilon}+\varphi_{\mb{e}_{j}}^{\epsilon})|^{2}\,d\xi
\end{multline}
In \cite[Theorem 3.1]{BEGK}, it is shown that
\begin{equation}
\epsilon\int_{\bb{R}^{d}}e^{-\Phi/\epsilon}\,|
D\varphi_{\mb{e}_{i}}^{\epsilon}|^{2}d\xi\;=\;[1+o_{\epsilon}(1)]\,
e^{-H/\epsilon}\,(2\pi\epsilon)^{d/2}\,\sum_{l=1}^{K}\kappa_{i,l}\;.\label{vm2}
\end{equation}
and that, for $i\neq j$,
\begin{multline}
\epsilon\int_{\bb{R}^{d}}e^{-\Phi/\epsilon}|\,
D(\varphi_{\mb{e}_{i}}^{\epsilon}+\varphi_{\mb{e}_{j}}^{\epsilon})|^{2}\,d\xi
\\
=\;[1+o_{\epsilon}(1)]\,e^{-H/\epsilon}\,(2\pi\epsilon)^{d/2}\,\sum_{1\le l\le K \,:\,l\neq i,j}(\kappa_{i,l}+\kappa_{j,l})\;.\label{vm3}
\end{multline}
By \eqref{vm2} and \eqref{vm3}, we have that
\begin{equation}
\epsilon\int_{\bb{R}^{d}}e^{-\Phi/\epsilon} \,D\varphi_{\mb{e}_{i}}^{\epsilon}\cdot D\varphi_{\mb{e}_{j}}^{\epsilon}\,d\xi
\;=\;-[1+o_{\epsilon}(1)]\,e^{-H/\epsilon}\,
(2\pi\epsilon)^{d/2}\,\kappa_{i,j}\;.\label{vm4}
\end{equation}
We can complete the proof by combining \eqref{cham}, \eqref{vm2} and \eqref{vm4}.
\end{proof}

\section{Construction of the Test Function\label{s5}}

\subsection{Preliminaries}

Let $\bb{M}$ be the symmetric $K\times K$ matrix defined by
\begin{equation*}
\bb{M}_{ij}\;=\;\begin{cases}
\frac{1}{\mu}\sum_{l=1}^{K}\kappa_{i,l} & \mbox{if }i=j\\
-\frac{1}{\mu}\kappa_{i,j} & \mbox{if}\mbox{ }i\neq j
\end{cases}\;\;;1\le i,j\le K\;,
\end{equation*}
so that
\begin{equation}
\mc{D}(\mb{x})\;=\;\mb{x}^{T}\bb{M}\mb{x}\;.\label{d20}
\end{equation}
Define two subsets of $\bb{R}^{K}$ by
\begin{align*}
&\mb{N} \;=\;   \{\mb{x}\in\bb{R}^{K}:x_{1}=x_{2}=\cdots=x_{K}\}\;,\\
&\mb{R}   \;=\;   \{\mb{x}\in\bb{R}^{K}:x_{1}+x_{2}+\cdots+x_{K}=0\}\;.
\end{align*}

\begin{lem}
\label{lem51}The null-space and range of the matrix $\bb{M}$
are $\mb{N}$ and $\mb{R}$ respectively. \end{lem}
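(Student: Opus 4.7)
The plan is to use three facts in sequence: symmetry of $\bb{M}$, the vanishing of its row sums, and the nonnegativity plus connectedness of the quadratic form $\mc{D}$. Symmetry of $\bb{M}$ is immediate from the symmetry $\kappa_{i,j}=\kappa_{j,i}$ built into the definition \eqref{kappa}. It implies that $\textup{Range}(\bb{M}) = \ker(\bb{M})^{\perp}$, so once I identify the null-space with $\mb{N}$, the range description $\mb{R} = \mb{N}^{\perp}$ follows automatically from the fact that $\mb{N}$ is one-dimensional spanned by $(1,\dots,1)$ and $\mb{R}$ is exactly its orthogonal complement in $\bb{R}^K$.

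For the inclusion $\mb{N} \subseteq \ker(\bb{M})$, the key observation is that each row of $\bb{M}$ sums to zero: for any $i$,
\begin{equation*}
\sum_{j=1}^{K} \bb{M}_{ij} \;=\; \frac{1}{\mu}\sum_{l=1}^{K} \kappa_{i,l} \,-\, \frac{1}{\mu}\sum_{j \neq i} \kappa_{i,j} \;=\; 0\;,
\end{equation*}
using $\kappa_{i,i}=0$. Hence $\bb{M}(1,\dots,1)^{T} = 0$, so every constant vector lies in $\ker(\bb{M})$.

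For the reverse inclusion $\ker(\bb{M}) \subseteq \mb{N}$, I would use the quadratic form. If $\bb{M}\mb{x} = 0$, then by \eqref{d20} and the definition \eqref{diri} of $\mc{D}$,
\begin{equation*}
0 \;=\; \mb{x}^{T} \bb{M} \mb{x} \;=\; \mc{D}(\mb{x}) \;=\; \frac{1}{2\mu} \sum_{i,j=1}^{K} \kappa_{i,j} (x_{j} - x_{i})^{2}\;.
\end{equation*}
Since every term is nonnegative, we conclude $x_i = x_j$ for every pair $(i,j)$ with $\kappa_{i,j} > 0$, i.e., for every edge of the graph $G$. Because $G$ is connected (as noted in Section \ref{s23}), this forces $x_1 = x_2 = \cdots = x_K$, so $\mb{x} \in \mb{N}$.

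Combining the two inclusions gives $\ker(\bb{M}) = \mb{N}$, and then symmetry together with the rank-nullity identification $\textup{Range}(\bb{M}) = \mb{N}^{\perp} = \mb{R}$ completes the proof. There is no real obstacle; the only point requiring care is invoking connectedness of $G$ at the correct moment, which is precisely the step that distinguishes this multi-valley setting from the naive analysis of a single edge.
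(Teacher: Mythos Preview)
Your proof is correct and follows essentially the same approach as the paper: both identify the null-space by showing $\mb{N}\subseteq\ker\bb{M}$ via the zero row sums and $\ker\bb{M}\subseteq\mb{N}$ via $\mc{D}(\mb{x})=0$ together with connectedness of $G$. The only cosmetic difference is that for the range the paper argues $\textup{Range}(\bb{M})\subseteq\mb{R}$ from the column sums and then uses rank--nullity, whereas you invoke symmetry of $\bb{M}$ to write $\textup{Range}(\bb{M})=\ker(\bb{M})^{\perp}=\mb{N}^{\perp}=\mb{R}$; both are one-line linear algebra observations.
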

\begin{proof}
Suppose that $\bb{M}\mb{b}=\mb{0}$. Then, by \eqref{d20}
we have $\mc{D}(\mb{b})=0$. Hence, $\mb{b}\in\mb{N}$
as we observed in the line following \eqref{diri}. On the other hand,  any $\mb{b\in}\mb{N}$ satisfies $\bb{M}\mb{b}=\mb{0}$.
Hence, the null-space of $\bb{M}$ is $\mb{N}$.
Since the dimension of the null-space is $1$, that of the range of
$\bb{M}$ must be $(K-1)$ dimensional. Since $\sum_{i=1}^{K}(\bb{M}\mb{b})_{i}=0$ for all
$\mb{b}\in\bb{R}^{K}$, the range of $\bb{M}$ is a subset
of $\mb{R}$. Since $\dim(\mb{R})=K-1$, we can conclude that
$\mb{R}$ is the range of $\bb{M}$.
\end{proof}
For $\mb{c}\in\mb{R}$, write $$\bb{M}^{-1}\mb{c}=\{\mb{b}\in\bb{R}^{K}:\bb{M}\mb{b}=\mb{c}\}$$
Then, for $\mb{b}\in\bb{M}^{-1}\mb{c}$, we can write
$\bb{M}^{-1}\mb{c}=\mb{N}+\mb{b}$. Hence,
we can observe that $\mc{D}(\cdot)$ is a constant function on
$\bb{M}^{-1}\mb{c}$.

Now define a function $\mc{D}_{\mb{c}}:\bb{R}^{K}\rightarrow\bb{R}$
by
\begin{equation}
\mc{D}_{\mb{c}}(\mb{x})\;=\;\mc{D}(\mb{x})-2\,\mb{c}\cdot\mb{x}\;.\label{le52}
\end{equation} Then, by \eqref{d20}, for $\mb{b}\in\bb{M}^{-1}\mb{c}$,
\begin{equation}
\mc{D}_{\mb{c}}(\mb{b})\;=\;\mb{b}\cdot\bb{M}\mb{b}-2\,\mb{c}\cdot\mb{b}
\;=\;-\mb{b}\cdot\bb{M}\mb{b}
\;=\;-\mc{D}(\mb{b})\;,\label{le53}
\end{equation}
and hence the function $\mc{D}_{\mb{c}}(\cdot)$ restricted
to $\bb{M}^{-1}\mb{c}$ is a constant function as well. Let us denote that
constant by $\mc{D}_{\mb{c}}(\bb{M}^{-1}\mb{c})$, with slight abuse of notation.
\begin{lem}
\label{l51}Fix $\mb{c}\in\mb{R}$. Then, $\mc{D}_{\mb{c}}(\bb{M}^{-1}\mb{c})$
is the minimum of $\mc{D}_{\mb{c}}(\cdot)$.
Furthermore, if $\mc{D}_{\mb{c}}(\mb{x})\le\mc{D}_{\mb{c}}(\bb{M}^{-1}\mb{c})+\delta$
for some $\delta>0$, then there exists $\mb{b}_{0}\in\bb{M}^{-1}\mb{c}$
such that $|\mb{x}-\mb{b}_{0}|\le C\sqrt{\delta}$ for some
constant $C>0$ not depending on $\delta$. \end{lem}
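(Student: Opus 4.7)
The plan is to view $\mc{D}_{\mb{c}}$ as a convex quadratic on $\bb{R}^K$ and reduce the quantitative stability statement to the spectral gap of $\bb{M}$ above its kernel. First I would record the standard "completing the square" identity: for any fixed $\mb{b}_0\in\bb{M}^{-1}\mb{c}$ and with $\mb{y}:=\mb{x}-\mb{b}_0$, a direct expansion using symmetry of $\bb{M}$ together with the defining equation $\bb{M}\mb{b}_0=\mb{c}$ collapses to
\begin{equation*}
\mc{D}_{\mb{c}}(\mb{x})-\mc{D}_{\mb{c}}(\mb{b}_0)\;=\;\mb{y}^{T}\bb{M}\mb{y}\;.
\end{equation*}
Since $\mb{b}^{T}\bb{M}\mb{b}=\mc{D}(\mb{b})\ge 0$ by \eqref{d20} and the explicit sum-of-squares form \eqref{diri}, the right-hand side is non-negative. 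This shows that every element of $\bb{M}^{-1}\mb{c}$ is a global minimizer of $\mc{D}_{\mb{c}}$, establishing the first claim and justifying the value $\mc{D}_{\mb{c}}(\bb{M}^{-1}\mb{c})$.

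For the quantitative part, I would exploit the orthogonal decomposition $\bb{R}^{K}=\mb{N}\oplus\mb{R}$, valid because $\mb{N}$ is the line spanned by $(1,\ldots,1)$ and $\mb{R}$ is exactly its orthogonal complement. Given any $\mb{x}\in\bb{R}^{K}$, I would pick any $\widetilde{\mb{b}}\in\bb{M}^{-1}\mb{c}$ and then translate it along the affine subspace $\bb{M}^{-1}\mb{c}=\widetilde{\mb{b}}+\mb{N}$ so that the residual $\mb{y}=\mb{x}-\mb{b}_0$ lies in $\mb{R}$; this is nothing other than choosing $\mb{b}_0$ to be the orthogonal projection of $\mb{x}$ onto the affine set $\bb{M}^{-1}\mb{c}$.

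By Lemma \ref{lem51}, $\bb{M}$ is symmetric with kernel $\mb{N}$ and range $\mb{R}$, so its restriction $\bb{M}\big|_{\mb{R}}$ is invertible and its smallest eigenvalue $\lambda_{*}$ is strictly positive. This gives the coercivity bound $\mb{y}^{T}\bb{M}\mb{y}\ge\lambda_{*}|\mb{y}|^{2}$ for $\mb{y}\in\mb{R}$. Plugging this into the identity of the first paragraph and invoking the hypothesis $\mc{D}_{\mb{c}}(\mb{x})\le\mc{D}_{\mb{c}}(\bb{M}^{-1}\mb{c})+\delta$ at once yields $|\mb{x}-\mb{b}_0|\le\lambda_{*}^{-1/2}\sqrt{\delta}$, so $C:=\lambda_{*}^{-1/2}$ works and clearly depends neither on $\delta$ nor on $\mb{x}$. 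I do not foresee any real obstacle here; the proof is purely linear-algebraic, and the only non-trivial input — coercivity of $\bb{M}$ on the orthogonal complement of its kernel — is already packaged into Lemma \ref{lem51}.
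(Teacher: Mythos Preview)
Your proposal is correct and follows essentially the same route as the paper: both arguments rest on the completing-the-square identity $\mc{D}_{\mb{c}}(\mb{x})=\mc{D}_{\mb{c}}(\mb{b})+\mc{D}(\mb{x}-\mb{b})$ for $\mb{b}\in\bb{M}^{-1}\mb{c}$, and then exploit that $\mc{D}$ is positive definite modulo its kernel $\mb{N}$. The only cosmetic difference is that the paper phrases the coercivity step as ``by continuity of $\mc{D}$ and the fact that the nullspace of $\mc{D}$ is $\mb{N}$'' and then adjusts $\mb{b}$ by a diagonal shift $(t,\dots,t)$, whereas you state it spectrally via the smallest positive eigenvalue $\lambda_*$ of $\bb{M}$ and pick $\mb{b}_0$ as the orthogonal projection onto $\bb{M}^{-1}\mb{c}$; these are the same computation, and your formulation gives the explicit constant $C=\lambda_*^{-1/2}$.
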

\begin{proof}
Let $\mb{b}\in\bb{M}^{-1}\mb{c}$. Then, since $\bb{M}$ is symmetric, it is easy to observe that for any $\mb{x}$,
\begin{equation*}
\mc{D}_{\mb{c}}(\mb{x})\;=\;\mc{D}_{\mb{c}}(\mb{b})+\mc{D}(\mb{t})\;.
\end{equation*}
where $\mb{t} := \mb{x} - \mb{b}$.
The first part of the lemma follows since $\mc{D}$ is a non-negative function. As for the second part,
we must have $\mc{D}(\mb{t})\le\epsilon$, and hence, by continuity of $\mc{D}$ and the fact that the nullspace of $\mc{D}$ is $\mb{N}$, we can find $t\in\bb{R}$
such that $|t_{i}-t|\le C\sqrt{\delta}$ for all $1\le i\le K,$ where $t_{i}$ is such that $\mb{t} = (t_{1},\cdots,t_{K})$.
Then, $\mb{b}_{0}:=\mb{b}+(t,t,\cdots,t)\in\mb{b}+\mb{N}=\bb{M}^{-1}\mb{c}$
fulfills the requirement of the second part of the lemma.
\end{proof}

\subsection{Test function}

Denote by $\mb{\chi}_{\mc{A}}(\cdot)$ the indicator function
of the set $\mc{A}\subset\bb{R}^{d}$. We emphasize that the following construction of the test function $\psi^\epsilon$ is the main ingredient in the proof of Theorem \ref{t12}, and contains most of technical difficulties of the problem.
\begin{theorem}
\label{t41}Fix a non-zero vector $\mb{c}\in\mb{R}$ and $\mb{b}\in\bb{M}^{-1}\mb{c}$.
Then, for each $\epsilon>0$, there exists a function $\psi^{\epsilon}\in W_{\textup{loc}}^{2,p}(\bb{R}^{d})\cap L^\infty (\bb{R}^d)$
for all $p\in[1,\infty)$ that satisfies the equation
\begin{equation}
-\textup{div}\left(\frac{\sigma^{\epsilon}}{\tau_{\epsilon}}\,D\psi^{\epsilon}\right)
\;=\;\sum_{i=1}^{K}\frac{c_{i}}{|\mc{V}_{i}|}\,\chi_{\mc{V}_{i}}\;,\label{e410}
\end{equation}
and the uniform energy estimate
\begin{equation}
 \sup_{0<\epsilon<1} {\int}_{\bb{R}^{d}}\frac{\sigma^{\epsilon}}
 {\tau_{\epsilon}}\left|D\psi^{\epsilon}\right|^{2} \;<\; \infty  \label{e411}
\end{equation}
and finally,
\begin{equation}
\lim_{\epsilon\rightarrow0}\,\sup_{1\le i\le K}\,\sup_{\xi\in\mc{V}_{i}}\left|\psi^{\epsilon}(\xi)-b_{i}\right|\;=\;0\;.\label{e412}
\end{equation}
\end{theorem}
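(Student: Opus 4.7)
The approach is variational. I would construct $\psi^\epsilon$ as a minimizer of the quadratic functional
\begin{equation*}
J^\epsilon(\psi)\;:=\;\int_{\bb{R}^d}\frac{\sigma^\epsilon}{\tau_\epsilon}|D\psi|^2\,d\xi\;-\;2\sum_{i=1}^K\frac{c_i}{|\mc{V}_i|}\int_{\mc{V}_i}\psi\,d\xi
\end{equation*}
on the weighted Sobolev space $\mc{H}^\epsilon:=\{\psi\in H^1_{\textup{loc}}(\bb{R}^d):\int_{\bb{R}^d}\frac{\sigma^\epsilon}{\tau_\epsilon}|D\psi|^2\,d\xi<\infty\}$, taken modulo additive constants. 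The compatibility condition $\sum_i c_i=0$ (equivalent to $\mb{c}\in\mb{R}$) makes $J^\epsilon$ well-defined on the quotient, and its Euler-Lagrange equation is precisely \eqref{e410}. Existence at each fixed $\epsilon$ follows from classical Poincar\'e on a bounded ball containing all of the $\mc{V}_i$'s (on which the weight is bounded below by a positive constant) combined with Riesz representation. Interior elliptic regularity upgrades $\psi^\epsilon$ to $W^{2,p}_{\textup{loc}}$, and the maximum principle on $\Delta$ (where $\psi^\epsilon$ is $(\sigma^\epsilon/\tau_\epsilon)$-harmonic) together with standard interior bounds on each $\mc{V}_i$ delivers the $L^\infty$ bound.

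For the uniform energy estimate \eqref{e411}, I would prove matching upper and lower bounds on $J^\epsilon(\psi^\epsilon)$, both tending to $-\mc{D}(\mb{b})$. The upper bound uses the competitor $\varphi_\mb{b}^\epsilon\in\mathscr{F}_\mb{b}$ from the proof of Theorem \ref{th41}: since $\bb{M}\mb{b}=\mb{c}$ and $\bb{M}$ is symmetric, \eqref{d20} gives $\mb{c}\cdot\mb{b}=\mc{D}(\mb{b})$, so
\begin{equation*}
J^\epsilon(\varphi_\mb{b}^\epsilon)\;=\;[1+o_\epsilon(1)]\mc{D}(\mb{b})-2\mc{D}(\mb{b})\;=\;-\mc{D}(\mb{b})+o_\epsilon(1)\;.
\end{equation*}
For the matching lower bound $\inf_\psi J^\epsilon(\psi)\ge-\mc{D}(\mb{b})-o_\epsilon(1)$, I would exploit the Gaussian concentration of $\sigma^\epsilon$ around each $m_i$, which endows the probability measure $\sigma^\epsilon\,d\xi/\sigma^\epsilon(\mc{V}_i)$ on $\mc{V}_i$ with a Poincar\'e constant of order $\epsilon$. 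Any $\psi$ of bounded Dirichlet energy is therefore close in weighted $L^2(\mc{V}_i)$ to its weighted average $b_i^\psi$, and a cut-off/interpolation across the thin annulus $\{H-\eta<\Phi<H-\eta/2\}$ produces $\tilde\psi\in\mathscr{F}_{\mb{b}^\psi}$ with $\int\frac{\sigma^\epsilon}{\tau_\epsilon}|D\tilde\psi|^2\le\int\frac{\sigma^\epsilon}{\tau_\epsilon}|D\psi|^2+o_\epsilon(1)$. Applying Theorem \ref{th41} to $\tilde\psi$ and Lemma \ref{l51},
\begin{equation*}
J^\epsilon(\psi)\;\ge\;[1-o_\epsilon(1)]\mc{D}_\mb{c}(\mb{b}^\psi)-o_\epsilon(1)\;\ge\;\min\mc{D}_\mb{c}-o_\epsilon(1)\;=\;-\mc{D}(\mb{b})-o_\epsilon(1)\;.
\end{equation*}
Pairing \eqref{e410} with $\psi^\epsilon$ gives $J^\epsilon(\psi^\epsilon)=-\int\frac{\sigma^\epsilon}{\tau_\epsilon}|D\psi^\epsilon|^2$, so the two-sided bound yields $\int\frac{\sigma^\epsilon}{\tau_\epsilon}|D\psi^\epsilon|^2\to\mc{D}(\mb{b})$, proving \eqref{e411}.

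For \eqref{e412}, the uniform energy bound combined with $\sigma^\epsilon/\tau_\epsilon\ge c\,\epsilon^{1-d/2}e^{\eta/\epsilon}$ on $\mc{V}_i$ (from \eqref{e14} and $\Phi\le H-\eta$ on $\mc{V}_i$) forces $\int_{\mc{V}_i}|D\psi^\epsilon|^2\,d\xi\to 0$ exponentially. Elliptic regularity promotes this to uniform closeness of $\psi^\epsilon$ to its weighted average $b_i^\epsilon$ on $\mc{V}_i$. By the previous step, $\mc{D}_\mb{c}(\mb{b}^\epsilon)\to\min\mc{D}_\mb{c}$, so the quantitative assertion of Lemma \ref{l51} yields $\mb{b}^\epsilon\to\mb{b}_0$ for some $\mb{b}_0\in\bb{M}^{-1}\mb{c}$. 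Since this affine set equals $\mb{b}+\mb{N}$, and both \eqref{e410} and the energy are invariant under $\psi^\epsilon\mapsto\psi^\epsilon+\textup{const}$, an additive shift of the minimizer arranges $\mb{b}_0=\mb{b}$ as required.

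The central technical hurdle is the lower bound on $J^\epsilon$: one must convert an arbitrary competitor $\psi$ of bounded Dirichlet energy into a genuinely admissible member of $\mathscr{F}_{\mb{b}^\psi}$ with only $o_\epsilon(1)$ energy loss. This demands sharp weighted Poincar\'e estimates on the annular transition region $\{H-\eta<\Phi<H-\eta/2\}$, where the competition between the cut-off gradient and the exponentially decaying weight $\sigma^\epsilon$ must be controlled precisely. The existence, regularity, and upper-bound comparison steps are, by contrast, quite standard.
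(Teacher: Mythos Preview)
Your variational framework, the upper bound via the competitor $\varphi_{\mb b}^\epsilon$, and the final passage through Lemma~\ref{l51} to pin down $\mb b^\epsilon$ modulo $\mb N$ all match the paper's argument. The divergence is in the lower bound, and there your spatial cut-off scheme has a genuine gap.

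You propose to flatten an arbitrary finite-energy $\psi$ to a member of $\mathscr F_{\mb b^\psi}$ by interpolating across the annulus $\{H-\eta<\Phi<H-\eta/2\}$. On that annulus the weight satisfies $\sigma^\epsilon/\tau_\epsilon\asymp\epsilon^{1-d/2}e^{(H-\Phi)/\epsilon}$, which is as large as $\epsilon^{1-d/2}e^{\eta/\epsilon}$. The energy bound together with the \emph{unweighted} Poincar\'e inequality on the enlarged valley only gives $\int|\psi-\bar\psi_i|^2\lesssim\epsilon^{d/2-1}e^{-(\eta/2)/\epsilon}$, so the cut-off contribution $\int_{\text{annulus}}\tfrac{\sigma^\epsilon}{\tau_\epsilon}|\psi-\bar\psi_i|^2|D\chi|^2$ is of order $e^{(\eta/2)/\epsilon}$, not $o_\epsilon(1)$. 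Your remedy is a \emph{weighted} Poincar\'e inequality with constant $O(\epsilon)$ on each valley, but the paper's hypotheses do not imply this: nothing forbids $\Phi$ from having shallow sub-wells inside $\mc W_i$, in which case the spectral gap of $\sigma^\epsilon|_{\mc V_i}$ is exponentially small, not $O(1/\epsilon)$.

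The paper sidesteps this completely by truncating in the \emph{range} rather than in $\xi$-space, and by working only with the minimizer $\phi^\epsilon$ rather than with an arbitrary competitor. Since $\phi^\epsilon$ solves the PDE, elliptic regularity (Lemmas~\ref{leg2}--\ref{leg3}) upgrades the $L^2$ oscillation bound on each valley to an $L^\infty$ bound $\|\phi^\epsilon-\lambda_{\epsilon,i}\|_{L^\infty(\mc V_i)}=o_\epsilon(1)(1+\lambda_\epsilon^{1-1/p})$. One then composes $\phi^\epsilon/\lambda_\epsilon$ with a carefully built piecewise-linear function $\Gamma:\bb R\to\bb R$ that is constant on a $\delta$-neighbourhood of each value $\lambda_{\epsilon,i}/\lambda_\epsilon$ and has global Lipschitz constant $1+o_L(1)$ (this requires a clustering argument to handle near-collisions among the $\lambda_{\epsilon,i}$). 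The composition $\widehat\phi=\Gamma(\phi^\epsilon/\lambda_\epsilon)$ lies in $\mathscr F_{\mb g}$ for some $\mb g$ close to $\mb l_\epsilon/\lambda_\epsilon$, and its Dirichlet energy is at most $(1+o_L(1))^2/\lambda_\epsilon^2$ times that of $\phi^\epsilon$ \emph{pointwise}, with no cut-off term at all. Theorem~\ref{th41} applied to $\widehat\phi$ then yields the lower bound and, after a bootstrap, the boundedness of $\lambda_\epsilon$. This range-truncation idea is the paper's main technical novelty and is what makes the argument go through without any spectral-gap assumption on the individual valleys.
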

The proof of this theorem is divided into several lemmas. We start by simplifying the problem and by introducing relevant notions before starting these lemmas.

By linearity, it suffices to prove the theorem
for $\mb{c}=\mb{e}_{i}-\mb{e}_{j}$ for some $i\neq j$.
Therefore, without loss of generality, we assume that $\mb{c}=\mb{e}_{1}-\mb{e}_{2}=(1,-1,0,\,\cdots,0)$,
so that $c_{1}=1$, $c_{2}=-1$, $c_{i}=0$ for $i\ge3$.

For $\phi\in H_{\textrm{loc}}^{1}(\bb{R}^{d})$, define a functional $I$
by
\begin{equation}\label{i}
I[\phi]\;=\;\frac{1}{2}\int_{\bb{R}^{d}}\frac{\sigma^{\epsilon}}{\tau_{\epsilon}}
\left|D\phi\right|^{2} \,d\xi
\,-\,\frac{1}{|\mc{V}_{1}|}\int_{\mc{V}_{1}}\phi \, d\xi\,+\,\frac{1}{|\mc{V}_{2}|}\int_{\mc{V}_{2}}\phi \,d\xi\;,
\end{equation}
and let $\phi^{\epsilon}$ be a minimizer of $I[\phi]$ on $H_{\textrm{loc}}^{1}(\bb{R}^{d})$.
Then the Euler-Lagrange equation for $\phi^{\epsilon}$ is \eqref{e410}
for $\mb{c}=\mb{e}_{1}-\mb{e}_{2}\in\bb{R}^{K}$,
and moreover $\phi^{\epsilon}\in W_{\textrm{loc}}^{2,p}(\bb{R}^{d})$
for all $p\in[1,\infty)$.

For $1\le i\le K$, define
\begin{equation*}
\lambda_{\epsilon,i}\;=\;\frac{1}{|\mc{V}_{i}|}\int_{\mc{V}_{i}}\phi^{\epsilon}\,d\xi\;.
\end{equation*}
Since $I[\phi^{\epsilon}]=I[\phi^{\epsilon}+c]$ for all $c\in\bb{R}$,
we can assume without loss of generality that
\begin{equation*}
\lambda_{\epsilon,1}\;=\;-\lambda_{\epsilon,2}\;:=\;\lambda_{\epsilon}\;.
\end{equation*}
Note that $\lambda_{\epsilon}\ge0$ since otherwise we can replace $\phi^{\epsilon}$ with $-\phi^{\epsilon}$.
Let $\mu_{\epsilon,i}:=\sup_{\xi\in\mc{V}_{i}}\left|\phi^{\epsilon}(\xi)\right|$
and define
\begin{equation}
\mu_{\epsilon}\;:=\;\max\{\mu_{\epsilon,1},\,\mu_{\epsilon,2}\}\;.\label{c0}
\end{equation}
Then we can assume that
\begin{equation*}
\sup_{\xi\in\bb{R}^{d}}\left|\phi^{\epsilon}(\xi)\right|\;=\;\mu_{\epsilon}\;,
\end{equation*}
since otherwise, $\bar{\phi}^{\epsilon}=\Lambda(\phi^{\epsilon})$ gives a lower value of $I$,
where
\begin{equation*}
\Lambda(s)=\begin{cases}
-\mu_{\epsilon} & \text{if }s\in(-\infty,\,-\mu_{\epsilon})\\
s & \text{if }s\in[-\mu_{\epsilon},\,\mu_{\epsilon}]\\
\mu_{\epsilon} & \text{if }s\in(\mu_{\epsilon},\,\infty)
\end{cases}
\end{equation*}

With the simplification and notations above, we now start the proof of the Theorem \ref{t41}. The first step is the following lemma.
\begin{lem}\label{leg1}
We have that
\begin{equation*}
\int_{\bb{R}^{d}}\frac{\sigma^{\epsilon}}{\tau_{\epsilon}}\,
|D\phi^{\epsilon}|^{2}\,d\xi\;=\;2\,\lambda_{\epsilon}\;.
\end{equation*}
\end{lem}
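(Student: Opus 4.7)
My approach rests on the observation that the claim is an energy identity for the minimizer of a quadratic-plus-linear functional, which I can extract from a simple scaling argument that avoids integration by parts on the unbounded domain $\bb{R}^{d}$. The key point is that for every $t\in\bb{R}$, the function $t\phi^{\epsilon}$ belongs to $H^{1}_{\textup{loc}}(\bb{R}^{d})$ and is a valid competitor against $\phi^{\epsilon}$ in the minimization of $I$.

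Using the normalization $\lambda_{\epsilon,1}-\lambda_{\epsilon,2}=2\lambda_{\epsilon}$, a direct calculation from \eqref{i} gives
\[
I[t\phi^{\epsilon}]\;=\;\frac{t^{2}}{2}\,\mathcal{E}\,-\,2t\lambda_{\epsilon},\qquad \mathcal{E}\;:=\;\int_{\bb{R}^{d}}\frac{\sigma^{\epsilon}}{\tau_{\epsilon}}|D\phi^{\epsilon}|^{2}\,d\xi.
\]
Since $\phi^{\epsilon}$ minimizes $I$ over $H^{1}_{\textup{loc}}(\bb{R}^{d})$, the scalar map $t\mapsto I[t\phi^{\epsilon}]$ is minimized at $t=1$. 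Differentiating in $t$ and setting the derivative equal to zero at $t=1$ yields $\mathcal{E}=2\lambda_{\epsilon}$, which is precisely the desired identity.

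To complete the argument, I need to verify that $\mathcal{E}<\infty$ so that this first-order condition is meaningful. This is immediate by comparing $\phi^{\epsilon}$ against the trivial competitor $0$: from $I[\phi^{\epsilon}]\le I[0]=0$ together with the bound $|\phi^{\epsilon}|\le\mu_{\epsilon}$, one obtains $\mathcal{E}\le 2(\lambda_{\epsilon,1}-\lambda_{\epsilon,2})\le 4\mu_{\epsilon}<\infty$. In the degenerate case $\mathcal{E}=0$, the gradient $D\phi^{\epsilon}$ vanishes almost everywhere on the connected set $\bb{R}^{d}$ (on which $\sigma^{\epsilon}/\tau_{\epsilon}>0$), so $\phi^{\epsilon}$ must be constant, forcing $\lambda_{\epsilon}=0$, and the identity holds trivially.

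The main conceptual difficulty, had one attempted the most obvious route, would be justifying the direct integration-by-parts approach: multiplying the Euler-Lagrange equation \eqref{e410} by $\phi^{\epsilon}$ and integrating over $\bb{R}^{d}$ requires controlling the boundary terms at infinity via a cutoff $\eta_{R}$, exploiting the rapid decay of $\sigma^{\epsilon}/\tau_{\epsilon}$ supplied by \eqref{tight} and \eqref{e14} together with $|\phi^{\epsilon}|\le\mu_{\epsilon}$ to kill the cross term involving $D\eta_{R}$. The scaling argument above avoids this technicality entirely, which is why I would prefer it.
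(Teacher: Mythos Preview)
Your proof is correct and takes a genuinely different route from the paper. The paper proceeds exactly as you describe in your final paragraph: it first records the a priori bound $\mathcal{E}\le 4\lambda_{\epsilon}$ from $I[\phi^{\epsilon}]\le I[0]$, then multiplies the Euler--Lagrange equation \eqref{e410} by $\zeta_{R}\phi^{\epsilon}$ with a smooth cutoff $\zeta_{R}$, integrates by parts, and uses Cauchy--Schwarz together with Lemma~\ref{lem32} to show the cross term $\int\frac{\sigma^{\epsilon}}{\tau_{\epsilon}}\phi^{\epsilon}D\phi^{\epsilon}\cdot D\zeta_{R}\,d\xi$ vanishes as $R\to\infty$. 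Your scaling argument exploits instead the homogeneity structure of the functional: because $I$ is quadratic-plus-linear, the one-parameter family $t\mapsto I[t\phi^{\epsilon}]$ is an explicit polynomial whose minimum at $t=1$ immediately yields the identity. This is cleaner and avoids the cutoff analysis entirely; the paper's approach, on the other hand, is the one that would generalize if the Euler--Lagrange equation had a nontrivial right-hand side not arising from a linear functional (so that the scaling trick would fail), and it makes explicit the decay mechanism at infinity that is implicitly hidden in your variational argument.
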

\begin{proof}
First observe that, since $\phi^{\epsilon}$ is a minimizer, $I[\phi^{\epsilon}]\le I[0]$, and so
\begin{equation}
\int_{\bb{R}^{d}}\frac{\sigma^{\epsilon}}{\tau_{\epsilon}}\,|D\phi^{\epsilon}|^{2}\,
d\xi\;\le\;4\,\lambda_{\epsilon}\;.\label{c2}
\end{equation}
For $R>0$, let $\mc{B}_{R}:= \{\xi\in\bb{R}^{d}:|\xi|\le R\}$
and let $\zeta_{R}:\bb{R}^{d}\rightarrow [0,\,1]$ be a smooth cutoff function with compact support such that $\zeta_{R}\equiv1$ on $B_{R}$, and $|D\zeta_{R}|\le1$.
Let us select $R$ large enough so that $\mc{V}_{i}\subset\mc{B}_{R}$
for all $1\le i\le K$ and $\Phi(\xi)>H+1$ on $(\mc{B}_{R})^{c}$
Then, multiplying \eqref{e410} by $\zeta_{R}\phi^{\epsilon}$ and integrating by parts, we obtain
\begin{equation}
\int_{\bb{R}^{d}}\frac{\sigma^{\epsilon}}{\tau_{\epsilon}}\,|D\phi^{\epsilon}|^{2}
\,\zeta_{R}\,d\xi
\;=\;
2\,\lambda_{\epsilon}-\int_{\bb{R}^{d}}\frac{\sigma^{\epsilon}}{\tau_{\epsilon}}\,
\phi^{\epsilon}\,D\phi^{\epsilon}\cdot D\zeta_{R}\,d\xi\;.\label{c3}
\end{equation}
Because $|D\zeta_{R}|\le1$, the square of the last term is bounded by
\begin{equation}
\left( \int_{\bb{R}^{d}}\frac{\sigma^{\epsilon}}{\tau_{\epsilon}}
\left|D\phi^{\epsilon}\right|^{2}
\,d\xi\, \right) \left( \int_{(\mc{B}_{R})^{c}}\frac{\sigma^{\epsilon}}{\tau_{\epsilon}}\,d\xi\; \right).\label{c4}
\end{equation}
Note that by the assumption $\Phi(\xi)>H+1$ on $(\mc{B}_{R})^{c}$ and by Lemma \ref{lem32},
the last integral converges to $0$ as $R\rightarrow\infty$ Hence, by our priori bound \eqref{c2}, the expression in
\eqref{c4} vanishes as $R\rightarrow\infty$. Hence, the proof is completed
 by letting $R\rightarrow\infty$ in \eqref{c3}.
\end{proof}

Recall the definition of $\mc{V}_{i}$ from \eqref{e01}. Let
us take $0<\eta'<\eta$ and let $\widetilde{\mc{V}}_{i}$, $1\le i\le K$,
be the connected component of
\begin{equation*}
\{\xi\in\mc{W}_{i}:\Phi(\xi)\,<\,H-\eta'\}
\end{equation*}
containing $\mc{V}_{i}$. Then, we can obtain the following $L^2$-estimate for $\phi^\epsilon-{\lambda_\epsilon,i}$ on the extended valley $\widetilde{\mathcal{V}}_i$, for all $1\le i\le K$.
\begin{lem}\label{leg2}For all $1\le i\le K$, it holds that
\begin{equation*}
\left\Vert \phi^{\epsilon}-\lambda_{\epsilon,i}\right\Vert _{L^{2}(\widetilde{\mc{V}}_{i})}\;=\;o_\epsilon(1)\lambda_\epsilon^{\frac{1}{2}}
\;.
\end{equation*}
\end{lem}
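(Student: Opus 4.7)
The plan is to exploit the fact that on the extended valley $\widetilde{\mc{V}}_i$ the weight $\sigma^{\epsilon}/\tau_{\epsilon}$ is exponentially large, so the energy bound from Lemma \ref{leg1} translates into a very strong (unweighted) Dirichlet bound for $\phi^{\epsilon}$ there, to which a standard Poincaré--Wirtinger inequality can then be applied.

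Concretely, I would first use the explicit asymptotic identity \eqref{e14}: for $\xi \in \widetilde{\mc{V}}_i$ one has $\Phi(\xi) < H - \eta'$, hence
\begin{equation*}
\frac{\sigma^{\epsilon}(\xi)}{\tau_{\epsilon}} \;=\; [1+o_{\epsilon}(1)]\,\frac{\epsilon}{(2\pi\epsilon)^{d/2}\mu}\,e^{(H-\Phi(\xi))/\epsilon} \;\ge\; c\,\epsilon^{1-d/2}\,e^{\eta'/\epsilon}
\end{equation*}
for some constant $c>0$ and all sufficiently small $\epsilon$. Dividing the weighted energy by this lower bound and invoking Lemma \ref{leg1} gives
\begin{equation*}
\int_{\widetilde{\mc{V}}_i} |D\phi^{\epsilon}|^2 \, d\xi \;\le\; \frac{1}{c}\,\epsilon^{d/2-1}\,e^{-\eta'/\epsilon} \int_{\bb{R}^d} \frac{\sigma^{\epsilon}}{\tau_{\epsilon}}\,|D\phi^{\epsilon}|^2\,d\xi \;=\; \frac{2}{c}\,\epsilon^{d/2-1}\,e^{-\eta'/\epsilon}\,\lambda_{\epsilon}.
\end{equation*}

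Second, I would observe that $\widetilde{\mc{V}}_i$ is a bounded Lipschitz domain: by the choice of $\eta\in(0,H-h)$ the level $H-\eta'$ is a regular value of $\Phi$, so $\{\Phi < H-\eta'\}$ has smooth boundary and $\widetilde{\mc{V}}_i$ is one of its connected components. Since $\mc{V}_i \subset \widetilde{\mc{V}}_i$ has positive measure, the standard Poincaré--Wirtinger inequality (with average taken over the subset $\mc{V}_i$) yields a constant $C = C(\widetilde{\mc{V}}_i, \mc{V}_i)$ such that
\begin{equation*}
\int_{\widetilde{\mc{V}}_i} |\phi^{\epsilon} - \lambda_{\epsilon,i}|^2 \, d\xi \;\le\; C\int_{\widetilde{\mc{V}}_i} |D\phi^{\epsilon}|^2 \, d\xi.
\end{equation*}
Combining with the previous display and taking the square root gives
\begin{equation*}
\|\phi^{\epsilon} - \lambda_{\epsilon,i}\|_{L^2(\widetilde{\mc{V}}_i)} \;\le\; C'\,\epsilon^{(d/2-1)/2}\,e^{-\eta'/(2\epsilon)}\,\lambda_{\epsilon}^{1/2},
\end{equation*}
and the prefactor $\epsilon^{(d/2-1)/2} e^{-\eta'/(2\epsilon)}$ is $o_{\epsilon}(1)$ in every dimension $d$ because the exponential decay dominates any polynomial in $\epsilon$. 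This is exactly the claimed estimate.

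There is really no serious obstacle: the mechanism is the mismatch between the weight on $\widetilde{\mc{V}}_i$ (where $\Phi$ is strictly below $H$) and the natural scale $e^{-H/\epsilon}$ built into $\tau_{\epsilon}$, which generates an exponentially small factor out of the global weighted energy. The only technical points worth flagging are (i) checking that $H-\eta'$ is a regular value so that $\widetilde{\mc{V}}_i$ is a Lipschitz domain (which follows from the choice of $\eta$ in Section \ref{s2}) and (ii) using the Poincaré--Wirtinger inequality in the form where the comparison constant is the mean over a fixed measurable subset $\mc{V}_i$ of the domain, which is classical.
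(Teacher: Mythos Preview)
Your argument is correct and follows essentially the same approach as the paper: bound the unweighted Dirichlet energy on $\widetilde{\mc{V}}_i$ using \eqref{e14} and Lemma \ref{leg1}, then apply a Poincar\'e inequality. The only cosmetic difference is that the paper applies the standard Poincar\'e--Wirtinger inequality with the mean $\widetilde{\lambda}_{\epsilon,i}$ over the full domain $\widetilde{\mc{V}}_i$ and then separately estimates $|\lambda_{\epsilon,i}-\widetilde{\lambda}_{\epsilon,i}|$, whereas you invoke the (equally classical) version with the mean taken over the subset $\mc{V}_i$, which collapses those two steps into one.
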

\begin{proof}
Define
\begin{equation*}
\widetilde{\lambda}_{\epsilon,i}\;=\;\frac{1}{|\widetilde{\mc{V}}_{i}|}
\int_{\widetilde{\mc{V}}_{i}}\phi^{\epsilon}\,d\xi\;\;;\;1\le i\le K\;.
\end{equation*}
By using Poincar\'{e}'s inequality, as well as \eqref{e14} and Lemma \ref{leg1}, we get that for
all $1\le i\le K$,
\begin{multline}
 \int_{\widetilde{\mc{V}}_{i}}|\phi^{\epsilon}-\widetilde{\lambda}_{\epsilon,i}|^{2}
 \,d\xi \;  \le   \; C\int_{\widetilde{\mc{V}}_{i}}\left|D\phi^{\epsilon}\right|^{2}\,d\xi\;\le\; C\,\epsilon^{(d/2)-1}\,e^{-\frac{\eta'}{\epsilon}}
 \int_{\widetilde{\mc{V}}_{i}}\frac{\sigma^{\epsilon}}
 {\tau_{\epsilon}}\left|D\phi^{\epsilon}\right|^{2}\,d\xi  \\
  \le \; C\,\epsilon^{(d/2)-1}\,e^{-\frac{\eta'}{\epsilon}}\,\lambda_{\epsilon}\;.\label{c41}
\end{multline}
Hence, we can derive
\begin{multline}
\left|\lambda_{\epsilon,i}-\widetilde{\lambda}_{\epsilon,i}\right|
\;=\;
\left|\frac{1}{|\mc{V}_{i}|}\int_{\mc{V}_{i}}(\phi^{\epsilon}-
\widetilde{\lambda}_{\epsilon,i})\,d\xi\right|
\;\le\;
 C\left[\int_{\widetilde{\mc{V}}_{i}}|\phi^{\epsilon}-
\widetilde{\lambda}_{\epsilon,i}|^{2}\,d\xi\right]^{\frac{1}{2}}\\
=\;o_{\epsilon}(1)\,\lambda_{\epsilon}^{\frac{1}{2}}\;.\label{c5}
\end{multline}
Now, combining \eqref{c41} and \eqref{c5} completes the proof of lemma.
\end{proof}
The next step is to enhance the previous $L^2$-estimate on extended valley $\widetilde{\mc{V}}_i$ to the $L^\infty$-estimated on the original valley $\mc{V}_i$. The proof is based on the elliptic estimate on $\phi^\epsilon$, and on a bootstrapping argument. Let us fix $p\in(d,\,\infty)$ from now on, and regard $p$ just as a constant.
\begin{lem}\label{leg3}
For all $1\le i\le K$, it holds that,
\begin{equation*}
\left\Vert \phi^{\epsilon}-\lambda_{\epsilon,i}\right\Vert _{L^{\infty}(\mc{V}_{i})}\;\le \; o_{\epsilon}(1)\,\big(1+\lambda_{\epsilon}^{1-\frac{1}{p}}\big)\;.
\end{equation*}
\end{lem}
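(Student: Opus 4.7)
The starting observation is that setting $u^\epsilon := \phi^\epsilon - \lambda_{\epsilon,i}$ yields a function still satisfying the Euler-Lagrange equation \eqref{e410} (a constant is annihilated by the divergence) and, by Lemma \ref{leg2}, enjoying $\|u^\epsilon\|_{L^2(\widetilde{\mc{V}}_i)} = o_\epsilon(1)\,\lambda_\epsilon^{1/2}$. My plan is to upgrade this $L^2$-control to an $L^\infty$-bound on the smaller set $\mc{V}_i \subset\subset \widetilde{\mc{V}}_i$ by combining interior elliptic regularity with interpolation, and then to close a self-consistent inequality through Young's inequality to eliminate the latent dependence on $\mu_\epsilon = \sup_\xi |\phi^\epsilon(\xi)|$.

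Concretely, I would first rewrite \eqref{e410} on $\widetilde{\mc{V}}_i$ in non-divergence form as
\[
-\Delta u^\epsilon + \frac{1}{\epsilon}\,D\Phi\cdot Du^\epsilon \;=\; g^\epsilon, \qquad g^\epsilon := \frac{c_i}{|\mc{V}_i|}\cdot\frac{\tau_\epsilon}{\sigma^\epsilon}\,\chi_{\mc{V}_i}\;.
\]
By \eqref{e14} and the inequality $\Phi \le H-\eta$ on $\mc{V}_i$, we have $|g^\epsilon|\le C\epsilon^{d/2-1}e^{-\eta/\epsilon}$, so $g^\epsilon$ is exponentially small in every $L^q(\widetilde{\mc{V}}_i)$-norm. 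Then the interior $W^{2,p}$ Calder\'on-Zygmund estimate (e.g.\ Gilbarg-Trudinger Theorem~9.11) on the pair $\mc{V}_i \subset\subset \widetilde{\mc{V}}_i$, combined with the Sobolev embedding $W^{2,p}(\mc{V}_i)\hookrightarrow L^\infty(\mc{V}_i)$ (valid since $p>d$), gives
\[
\|u^\epsilon\|_{L^\infty(\mc{V}_i)} \;\le\; C_\epsilon\bigl(\|u^\epsilon\|_{L^p(\widetilde{\mc{V}}_i)} + \|g^\epsilon\|_{L^p(\widetilde{\mc{V}}_i)}\bigr)\;,
\]
with $C_\epsilon$ depending polynomially on the drift bound $\|D\Phi\|_\infty/\epsilon$. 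The classical H\"older interpolation $\|u^\epsilon\|_{L^p} \le \|u^\epsilon\|_{L^2}^{2/p}\|u^\epsilon\|_{L^\infty}^{1-2/p}$, together with the trivial pointwise bound $|u^\epsilon|\le 2\mu_\epsilon$, yields $\|u^\epsilon\|_{L^p(\widetilde{\mc{V}}_i)} \le o_\epsilon(1)\lambda_\epsilon^{1/p}\mu_\epsilon^{1-2/p}$. Since the $o_\epsilon(1)$ from Lemma \ref{leg2} is in fact exponentially small, it absorbs $C_\epsilon$ and leaves
\[
\|u^\epsilon\|_{L^\infty(\mc{V}_i)} \;\le\; o_\epsilon(1)\,\lambda_\epsilon^{1/p}\,\mu_\epsilon^{1-2/p} + o_\epsilon(1)\;.
\]

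To close the loop, recall $\mu_\epsilon = \mu_{\epsilon,k}$ for some $k\in\{1,2\}$ with $|\lambda_{\epsilon,k}|=\lambda_\epsilon$. Applying the previous estimate with $i=k$ and using $\mu_\epsilon \le \|u^\epsilon\|_{L^\infty(\mc{V}_k)} + \lambda_\epsilon$ gives $\mu_\epsilon \le o_\epsilon(1)\lambda_\epsilon^{1/p}\mu_\epsilon^{1-2/p} + \lambda_\epsilon + o_\epsilon(1)$, at which point Young's inequality
\[
\lambda_\epsilon^{1/p}\mu_\epsilon^{1-2/p} \;\le\; \frac{2}{p}\lambda_\epsilon^{1/2} + \frac{p-2}{p}\mu_\epsilon
\]
(with conjugate exponents $p/2$ and $p/(p-2)$) lets me absorb a small multiple of $\mu_\epsilon$ to the left, producing $\mu_\epsilon = O(1+\lambda_\epsilon)$ for small $\epsilon$. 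Substituting this bound back into the $L^\infty$-estimate for an arbitrary index $i$, and using $(1+\lambda_\epsilon)^{1-2/p}\lesssim 1+\lambda_\epsilon^{1-2/p}$, yields the claimed estimate $\|u^\epsilon\|_{L^\infty(\mc{V}_i)} \le o_\epsilon(1)(1+\lambda_\epsilon^{1-1/p})$. The main obstacle is the $\epsilon$-dependence of $C_\epsilon$ arising from the unbounded drift $D\Phi/\epsilon$; this is overcome because both $\|g^\epsilon\|_{L^p}$ and the $o_\epsilon(1)$ factor from Lemma \ref{leg2} decay exponentially in $\epsilon$, which handily absorbs any polynomial growth of $C_\epsilon$ in $1/\epsilon$.
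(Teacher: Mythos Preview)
Your proposal is correct and follows essentially the same architecture as the paper's proof: an interior elliptic estimate to pass from $L^p(\widetilde{\mc V}_i)$ to $L^\infty(\mc V_i)$, H\"older interpolation of the $L^p$ norm between the $L^2$ bound of Lemma~\ref{leg2} and the global bound $|u^\epsilon|\le 2\mu_\epsilon$, and then a bootstrap through the index $k$ realizing $\mu_\epsilon=\mu_{\epsilon,k}$ closed by Young's inequality. The one noteworthy technical difference is that the paper rewrites the equation with the pure Laplacian on the left and the drift moved to the right as $-\tfrac{1}{\epsilon}\mathrm{div}\bigl[(\phi^\epsilon-\lambda_{\epsilon,i})D\Phi\bigr]+\tfrac{1}{\epsilon}(\phi^\epsilon-\lambda_{\epsilon,i})\Delta\Phi$ and then applies the local boundedness estimate \cite[Theorem~8.17]{GT}; this keeps the elliptic constant $\epsilon$-free and produces a single explicit $1/\epsilon$ in front of the $L^p$ term, so one never needs your (correct, but not stated in \cite{GT}) claim that the Calder\'on--Zygmund constant in Theorem~9.11 grows only polynomially in the drift bound.
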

\begin{proof}We fix $1\le i\le K$. On the set $\mc{W}_{i}$, the function $\phi^{\epsilon}$
satisfies
\begin{equation*}
-\mbox{div}\left(\frac{\sigma^{\epsilon}}{\tau_{\epsilon}}D\phi^{\epsilon}\right)
\;=\;\frac{c_{i}}{|\mc{V}_{i}|}\chi_{\mc{V}_{i}}\;.
\end{equation*}
This equation can be rewritten as
\begin{equation*}
-\Delta(\phi^{\epsilon}- {\lambda}_{\epsilon,i})\;=\;
-\frac{1}{\epsilon}\textup{div}\left[(\phi^{\epsilon}- {\lambda}_{\epsilon,i})
\,D\Phi\right]\,+\,
\frac{1}{\epsilon}(\phi^{\epsilon}- {\lambda}_{\epsilon,i})
\,\Delta\Phi\,+\,\frac{\tau_{\epsilon}}{\sigma^{\epsilon}}\,\frac{c_{i}}{|\mc{V}_{i}|}
\,\chi_{\mc{V}_{i}}\;,
\end{equation*}
and, therefore, standard regularity estimates for elliptic PDE (cf. \cite[Theorem 8.17]{GT}) imply
\begin{equation*}
\left\Vert \phi^{\epsilon}- {\lambda}_{\epsilon,i}\right\Vert _{L^{\infty}(\mc{V}_{i})}   \;\le\;
C\left\Vert \phi^{\epsilon}- {\lambda}_{\epsilon,i}\right\Vert _{L^{2}(\widetilde{\mc{V}}_{i})}
\,+\,
\frac{C}{\epsilon}\left\Vert \phi^{\epsilon}- {\lambda}_{\epsilon,i}\right\Vert _{L^{p}(\widetilde{\mc{V}}_{i})}
\,+\,
C\left\Vert \frac{\tau_{\epsilon}}{\sigma^{\epsilon}}\right\Vert _{L^{\infty}(\mc{V}_{i})}\;,
\end{equation*}
since $p\in(d,\infty)$.
By Lemma \ref{leg2} and H\"{o}lder's inequality, (similar
argument to \cite[(3.31)]{ET}) we obtain
\begin{equation}
\left\Vert \phi^{\epsilon}-\lambda_{\epsilon,i}\right\Vert _{L^{\infty}(\mc{V}_{i})}
\;\le\; o_{\epsilon}(1)
\left[\,\lambda_{\epsilon}^{\frac{1}{2}}\,+\,C\left\Vert \phi^{\epsilon}-\lambda_{\epsilon,i}\right\Vert _{L^{\infty}(\widetilde{\mc{V}}_{i})}^{1-\frac{2}{p}}\lambda_{\epsilon}^{\frac{1}{p}}
\,+1\right]\;.\label{d1}
\end{equation}
Recall the definition of $\mu_{\epsilon}$ from \eqref{c0} and write
$\mu_{\epsilon}=\mu_{\epsilon,k}$ where $k$ is either $1$ or $2$. Then,
we obtain
\begin{multline}
      \left\Vert \phi^{\epsilon}-\lambda_{\epsilon,i}\right\Vert _{L^{\infty}(\widetilde{\mc{V}}_{i})}
      \;\le\;
      2\,\mu_{\epsilon}
      \;=\;2\,\mu_{\epsilon,k}
      \;\le\;2\left|\lambda_{\epsilon,k}\right|\,+\,2\,
      \left\Vert \phi^{\epsilon}-\lambda_{\epsilon,k}\right\Vert _{L^{\infty}(\mc{V}_{k})}
  \\
      =\;2\,\lambda_{\epsilon}+2\left\Vert \phi^{\epsilon}-\lambda_{\epsilon,k}\right\Vert _{L^{\infty}(\mc{V}_{k})}\;.\label{d2}
\end{multline}
By inserting this result into \eqref{d1} with $i=k$, we derive
\begin{equation*}
\left\Vert \phi^{\epsilon}-\lambda_{\epsilon,k}\right\Vert _{L^{\infty}(\mc{V}_{k})}
\;\le\;
 o_{\epsilon}(1)\left[1+2\left\Vert \phi^{\epsilon}-\lambda_{\epsilon,k}\right\Vert _{L^{\infty}(\mc{V}_{k})}^{1-\frac{2}{p}}\,\lambda_{\epsilon}^{\frac{1}{p}}
 \,+\,\lambda_{\epsilon}^{1-\frac{1}{p}}\,+\,
 \lambda_{\epsilon}^{\frac{1}{2}}\right]\,.
\end{equation*}
Therefore, by H\"{o}lder's inequality, we conclude that
\begin{equation}
\left\Vert \phi^{\epsilon}-\lambda_{\epsilon,k}\right\Vert _{L^{\infty}(\mc{V}_{k})}\;\le\; o_{\epsilon}(1)\,\big(1+\lambda_{\epsilon}^{1-\frac{1}{p}}\big)\;.\label{d3}
\end{equation}
By inserting \eqref{d3} into \eqref{d2}, we obtain,
\begin{equation}
\left\Vert \phi^{\epsilon}-\lambda_{\epsilon,i}\right\Vert _{L^{\infty}(\widetilde{\mc{V}}_{i})}
\;\le\;
2\,\lambda_{\epsilon}+
o_{\epsilon}(1)\,\big(1+\lambda_{\epsilon}^{1-\frac{1}{p}}\big)
 \;.\label{d4}
\end{equation}
Finally, the proof of lemma is completed by inserting this into \eqref{d1}.
\end{proof}
In view of the previous lemma, it is important to prove that $\lambda_\epsilon$ is bounded by a constant for small enough $\epsilon$. Indeed, we are able to establish more than this, as in the following lemma. The following lemma is the most renovative part of the current paper.
\begin{lem}\label{leg4}We have that,
\begin{equation*}
\lim_{\epsilon\rightarrow0}\lambda_{\epsilon}\;=\;
-\frac{1}{2}\,\mc{D}_{\mb{c}}(\mb{b})
\;=\;
\frac{1}{2}\,\mc{D}(\mb{b})\;.
\end{equation*}
\end{lem}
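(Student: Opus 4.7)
The strategy is to prove matching upper and lower bounds on $-\lambda_{\epsilon} = I[\phi^{\epsilon}] = \min I$, both asymptotic to $\tfrac{1}{2}\mc{D}_{\mb{c}}(\mb{b})$.

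\smallskip
\emph{Upper bound} $\bigl(I[\phi^{\epsilon}] \le \tfrac{1}{2}\mc{D}_{\mb{c}}(\mb{b}) + o_{\epsilon}(1)\bigr)$. I test the variational problem for $\phi^{\epsilon}$ with the capacitor function $\varphi_{\mb{b}}^{\epsilon} \in \mathscr{F}_{\mb{b}}$ from Theorem \ref{th41}. Since $\varphi_{\mb{b}}^{\epsilon} = b_{i}$ on $\mc{V}_{i}$, the linear part of $I$ evaluates to $-b_{1} + b_{2} = -\mb{c}\cdot\mb{b}$, and Theorem \ref{th41} gives $\int \tfrac{\sigma^{\epsilon}}{\tau_{\epsilon}}|D\varphi_{\mb{b}}^{\epsilon}|^{2}\, d\xi = (1 + o_{\epsilon}(1))\mc{D}(\mb{b})$. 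Using $\bb{M}\mb{b} = \mb{c}$ and symmetry of $\bb{M}$, $\mb{c}\cdot\mb{b} = \mb{b}\cdot\bb{M}\mb{b} = \mc{D}(\mb{b})$, hence
\begin{equation*}
I[\varphi_{\mb{b}}^{\epsilon}] \;=\; \tfrac{1}{2}(1 + o_{\epsilon}(1))\mc{D}(\mb{b}) - \mc{D}(\mb{b}) \;=\; -\tfrac{1}{2}\mc{D}(\mb{b}) + o_{\epsilon}(1) \;=\; \tfrac{1}{2}\mc{D}_{\mb{c}}(\mb{b}) + o_{\epsilon}(1).
\end{equation*}
Minimality of $\phi^{\epsilon}$ gives $I[\phi^{\epsilon}] \le I[\varphi_{\mb{b}}^{\epsilon}]$, proving the bound.

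\smallskip
\emph{Lower bound} $\bigl(I[\phi^{\epsilon}] \ge \tfrac{1}{2}\mc{D}_{\mb{c}}(\mb{b}) - o_{\epsilon}(1)\bigr)$. Let $\mb{\Lambda}_{\epsilon} = (\lambda_{\epsilon,1}, \dots, \lambda_{\epsilon,K})$. Decompose
\begin{equation*}
I[\phi^{\epsilon}] \;=\; \tfrac{1}{2}\mc{D}_{\mb{c}}(\mb{\Lambda}_{\epsilon}) \,+\, \tfrac{1}{2}\Bigl[\int \tfrac{\sigma^{\epsilon}}{\tau_{\epsilon}}|D\phi^{\epsilon}|^{2}\, d\xi \,-\, \mc{D}(\mb{\Lambda}_{\epsilon})\Bigr].
\end{equation*}
By Lemma \ref{l51}, $\mc{D}_{\mb{c}}(\mb{\Lambda}_{\epsilon}) \ge \mc{D}_{\mb{c}}(\mb{b})$, so it suffices to show $\int \tfrac{\sigma^{\epsilon}}{\tau_{\epsilon}}|D\phi^{\epsilon}|^{2}\, d\xi \ge \mc{D}(\mb{\Lambda}_{\epsilon}) - o_{\epsilon}(1)$. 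To this end, I construct a competitor $\widetilde{\phi}^{\epsilon} \in \mathscr{F}_{\mb{\Lambda}_{\epsilon}}$ (exactly equal to $\lambda_{\epsilon,i}$ on each $\mc{V}_{i}$) by a surgery: glue $\lambda_{\epsilon,i}$ on $\mc{V}_{i}$ to $\phi^{\epsilon}$ outside $\widetilde{\mc{V}}_{i}$ through a smooth cutoff supported in the annular region $\widetilde{\mc{V}}_{i}\setminus\mc{V}_{i}$. The extra Dirichlet energy from the cutoff is controlled using the $L^{\infty}$-bound from Lemma \ref{leg3}, upgraded to $\widetilde{\mc{V}}_{i}$ by a bootstrap of its proof on a slightly larger extended valley. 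Theorem \ref{th41} applied to $\widetilde{\phi}^{\epsilon}$ then yields $\int \tfrac{\sigma^{\epsilon}}{\tau_{\epsilon}}|D\widetilde{\phi}^{\epsilon}|^{2}\, d\xi \ge (1 - o_{\epsilon}(1))\mc{D}(\mb{\Lambda}_{\epsilon})$, and since the surgery changes the energy by only $o_{\epsilon}(1)$, the claim follows.

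\smallskip
\emph{Boundedness of $\lambda_{\epsilon}$; main obstacle.} The surgery requires $\lambda_{\epsilon}$ to be bounded. Boundedness from below follows from the upper-bound step; an upper bound on $\lambda_{\epsilon}$ is obtained by contradiction: if $\lambda_{\epsilon} \to \infty$ along a subsequence, the rescaled function $\phi^{\epsilon}/\lambda_{\epsilon}$ has valley averages close to $(1, -1, \ldots)$ but Dirichlet energy $2/\lambda_{\epsilon} \to 0$, contradicting the surgery-plus-Theorem \ref{th41} lower bound (since the minimum of $\mc{D}$ over vectors with first two entries $1, -1$ is positive by connectedness of $G$). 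The most delicate step overall is the surgery: as $\tfrac{\sigma^{\epsilon}}{\tau_{\epsilon}}$ is of order $e^{\eta/\epsilon}\epsilon^{1-d/2}$ on the annular transition region, the $o_{\epsilon}(1)$-smallness of $\phi^{\epsilon} - \lambda_{\epsilon,i}$ provided by Lemma \ref{leg3} must be balanced against this exponential factor, likely via a Fubini-averaging argument over level sets of $\Phi$ in $[H - \eta, H - \eta']$ to select a favorable cutoff.
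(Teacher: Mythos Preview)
Your upper bound argument matches the paper's exactly. The substantive difference is in the lower bound (the ``reverse inequality'' $\limsup_{\epsilon\to 0}\lambda_\epsilon \le -\tfrac{1}{2}\mc{D}_{\mb c}(\mb b)$), and here your surgery approach runs into a genuine obstacle that you have correctly identified but not resolved.

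The difficulty is quantitative. On the transition annulus $\widetilde{\mc V}_i\setminus\mc V_i$ one has $\sigma^\epsilon/\tau_\epsilon \asymp \epsilon^{1-d/2}e^{(H-\Phi)/\epsilon}$, which is at least of order $e^{\eta'/\epsilon}$. On the other hand, the only control on $\phi^\epsilon-\lambda_{\epsilon,i}$ in that region comes from the elliptic/Poincar\'e chain of Lemmas~\ref{leg2}--\ref{leg3}, whose decay rate is at best $e^{-c/\epsilon}$ with $c$ \emph{strictly smaller} than the exponent governing the weight (because the $L^2$--Poincar\'e estimate on a region $\{\Phi<H-\eta''\}$ only yields $e^{-\eta''/\epsilon}$ with $\eta''<\eta'$, and the subsequent $L^p$--bootstrap further divides the exponent by $p>d$). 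Hence the surgery term $\int (\sigma^\epsilon/\tau_\epsilon)\,|\phi^\epsilon-\lambda_{\epsilon,i}|^2\,|D\zeta|^2$ carries an exponentially \emph{growing} factor, and neither a level-set averaging nor a thinner cutoff recovers the loss: any spatial cutoff must cross a layer where the weight dominates the available smallness of $\phi^\epsilon-\lambda_{\epsilon,i}$. Your boundedness-by-contradiction step invokes the same surgery, so the circularity is not broken.

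The paper avoids this entirely by working in \emph{value space} rather than physical space. It constructs a piecewise-linear function $\Gamma:\bb R\to\bb R$ with $\|\Gamma'\|_\infty\le 1+o_L(1)$ that is constant on small intervals around each $\lambda_{\epsilon,i}/\lambda_\epsilon$, and sets $\widehat\phi=\Gamma(\phi^\epsilon/\lambda_\epsilon)$. Then $\widehat\phi\in\mathscr F_{\mb g}$ for some $\mb g$ close to $\mb\Lambda_\epsilon/\lambda_\epsilon$ (this uses only the $L^\infty$ bound on $\mc V_i$ from Lemma~\ref{leg3}, not on the annulus), and the chain rule gives $\int(\sigma^\epsilon/\tau_\epsilon)|D\widehat\phi|^2\le(1+o_L(1))\lambda_\epsilon^{-2}\int(\sigma^\epsilon/\tau_\epsilon)|D\phi^\epsilon|^2$ with no exponential loss whatsoever. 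Combining with Theorem~\ref{th41} and Lemma~\ref{leg1} yields the reverse inequality directly; boundedness of $\lambda_\epsilon$ falls out of the same computation. The clustering parameters $L,\delta$ in the construction of $\Gamma$ are needed because the values $\lambda_{\epsilon,i}/\lambda_\epsilon$ for $i\ge 3$ are a priori uncontrolled and may collide. This ``flatten the range, not the domain'' idea is the key point you are missing.
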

\begin{proof}
Recall $\mb{b}\in\bb{M}^{-1}\mb{c}$ from the statement
of theorem. Hence, by \eqref{le53}, the second identity of the lemma is obvious.

Now we focus on the first identity of the lemma. Let $\varphi_{\mb{b}}^{\epsilon}$ be the minimizer
of the variational problem on the left-hand-side of \eqref{var2}.
Since $I[\phi^{\epsilon}]=-\lambda_{\epsilon}$ by Lemma \ref{leg1} and
since $\phi^{\epsilon}$ is the minimizer of $I[\phi]$, by Theorem
\ref{th41}, we obtain
\begin{equation}
-\lambda_{\epsilon} \;=\; I[\phi^\epsilon]
 \;\le\;
 I[\varphi_{\mb{b}}^{\epsilon}]
\;  = \;  \frac{1}{2}\int_{\bb{R}^{d}}\frac{\sigma^{\epsilon}}{\tau_{\epsilon}}\,
|D\varphi_{\mb{b}}^{\epsilon}|^{2}\,d\xi
\,-\,
\sum_{i=1}^{K}c_{i}b_{i}\,
=\;\frac{1}{2}\,\mc{D}_{\mb{c}}(\mb{b})\,+\,o_{\epsilon}(1)\;.\label{con11}
\end{equation}
Therefore,  we get
\begin{equation}
\liminf_{\epsilon\rightarrow0}\lambda_{\epsilon}\;\ge\;
-\frac{1}{2}\,\mc{D}_{\mb{c}}(\mb{b})
\;=\;
\frac{1}{2}\,\mc{D}(\mb{b})\;>\;0\;,\label{con111}
\end{equation}
where the last inequality is strict since $\mb{c}\neq\mb{0}$. This proves the
half of the first identity.

We now have to prove the reverse inequality, namely,
\begin{equation}\label{con112}
\limsup_{\epsilon\rightarrow0} \lambda_{\epsilon} \;\le\; - \frac{1}{2} \mc{D}_{\mb{c}}(\mb{b})\;.
\end{equation}
This is the crux of the proof. Let
$\beta_{1}\le\beta_{2}\le\cdots\le\beta_{K}$
be the enumeration of the numbers $\frac{\lambda_{\epsilon,1}}{\lambda_{\epsilon}}$,
$\cdots$, $\frac{\lambda_{\epsilon,K}}{\lambda_{\epsilon}}$. Strictly speaking, $\beta_{i} = \beta_{i,\epsilon},$ but we will ignore the dependency on $\epsilon$ for the time being. Fix $\delta>0$
and $L>2$. We now introduce an auxiliary function $\Gamma(x)$. We refer to Figure \ref{fig2} for the visualization of the construction below. For $1\le i\le j\le K$, we say that $B_{i,j}=\{\beta_{k}:i\le k\le j\}$
is a {\it{good set}} if
\begin{equation}
\begin{cases}
\beta_{k+1}-\beta_{k}\le L^{k}\delta & \mbox{for }i\le k\le j-1\;,\\
\beta_{k+1}-\beta_{k}>L^{k}\delta & \mbox{for }k=i-1\mbox{ and }k=j\;,
\end{cases}\label{go1}
\end{equation}
where $\beta_{0}:=-\infty$ and $\beta_{K+1}:=\infty$. Enumerate all good
sets by
\begin{equation*}
B_{j_{0}+1,j_{1}},\;B_{j_{1}+1,j_{2}},\,\cdots,\,B_{j_{M-1}+1,j_{M}}\;,
\end{equation*}
where $j_{0}:=0$ and $j_{M}:=K$. For $1\le k\le M$, define
\begin{equation*}
I_{k}\;=\;[\,\beta_{j_{k-1}+1}-\delta,\;\beta_{j_{k}}+\delta,]\;.
\end{equation*}

\begin{figure}
  \protect
\includegraphics[scale=0.200]{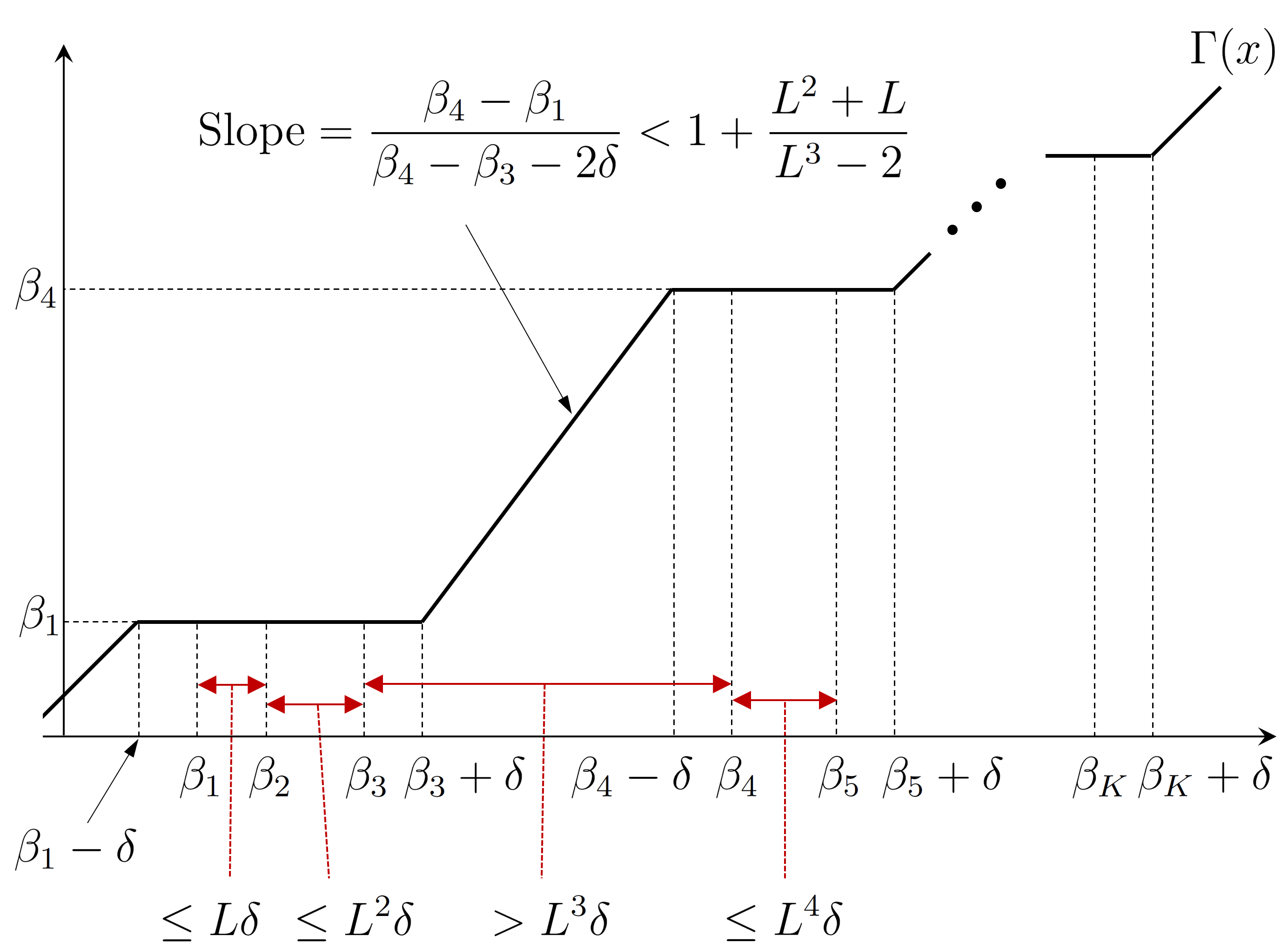}\protect
  \caption{\label{fig2}Illustration of the piecewise linear function $\Gamma(x)$: In this example, $B_{1,3}$ and $B_{4,5}$ are good sets, since $(\beta_1, \beta_2, \beta_3)$ and $(\beta_4, \beta_5)$ form cluster in the sense defined in \eqref{go1}, and accordingly, we have $j_1 =3$ and $j_2 =5$. In \eqref{go3} we show that the slope of each piece is bounded by $1+o_L(1)$.}
\end{figure}

We now define a piecewise linear function $\Gamma=\Gamma^{\epsilon,\delta,L}:\bb{R}\rightarrow\bb{R}$
by:
\begin{equation*}
\Gamma(x)=\begin{cases}
x+\delta & \mbox{if }x\in(-\infty,\,\beta_{1}-\delta)\\
\beta_{j_{k-1}+1} & \mbox{if }x\in I_{k},\;1\le k\le M\\
\beta_{j_{k-1}+1}+\dfrac{\beta_{j_{k}+1}-\beta_{j_{k-1}+1}}
{\beta_{j_{k}+1}-\beta_{j_{k}}-2\delta}(x-(\beta_{j_{k}}+\delta)) &\parbox{15em}{if $x\in(\beta_{j_{k}}+\delta,\beta_{j_{k}+1}-\delta)$,\\ \mbox{ }  $\,1\le k\le M-1$} \\
x-(\beta_{K}+\delta)+\beta_{j_{M-1}+1} & \mbox{if }x\in(\beta_{K}+\delta,\,\infty)\;.
\end{cases}
\end{equation*}
By construction, the function $\Gamma$ is continuous. Now we estimate the slope of $\Gamma$ on the interval $(\beta_{j_{k}}+\delta,\beta_{j_{k}+1}-\delta)$.  By \eqref{go1},
\begin{multline*}
\frac{\beta_{j_{k}+1}-\beta_{j_{k-1}+1}}{\beta_{j_{k}+1}-\beta_{j_{k}}-2\delta}   \;=\;   \frac{\beta_{j_{k}+1}-\beta_{j_{k}}+\sum_{i=j_{k-1}+1}^{j_{k}-1}
(\beta_{i+1}-\beta_{i})}{\beta_{j_{k}+1}-\beta_{j_{k}}-2\delta}\\
  \;<\;   \frac{L^{j_{k}}\delta+\sum_{i=j_{k-1}+1}^{j_{k}-1}L^{i}\delta}
  {L^{j_{k}}\delta-2\delta}
  \;<\;  \frac{L^{j_{k}}+KL^{j_{k}-1}}{L^{j_{k}}-2} \,.
\end{multline*}
Hence, since $L > 2$, we conclude that
\begin{equation}
\Vert\Gamma'\Vert_\infty  \;\le\; 1+\frac{K+2}{L-2}  \;=\; 1+o_L (1)\;,\label{go3}
\end{equation}
where $o_{L}(1)$ is a term vanishing
when $L\rightarrow\infty$ and independent of $\epsilon$ and $\delta$. Finally, observe that $\Gamma$ is constant on intervals of the form $[(\lambda_{\epsilon,i}/\lambda_{\epsilon})-\delta,\,(\lambda_{\epsilon,i}/\lambda_{\epsilon})+\delta]$,
$1\le i\le K$, and denote that constant by $\gamma_{i}=\gamma_{\epsilon,\delta,L,i}.$
Then, we obtain,
\begin{equation}
\left|\gamma_{i}-\frac{\lambda_{\epsilon,i}}{\lambda_{\epsilon}}\right|
\;\le\;
\max_{1\le k\le M}(\beta_{j_{k}}-\beta_{j_{k-1}+1})
\;\le\;
\max_{1\le k\le M} \sum_{i=j_{k-1}+1}^{j_{k}-1}L^{i}\delta \;\le\; KL^{K}\delta\;.\label{go4}
\end{equation}
Define
$$\mb{g}\;:=\;(\gamma_{1},\gamma_{2},\cdots,\gamma_{K})\;.$$
Then, since $\mb{g}\cdot\mb{c}=\gamma_{1}-\gamma_{2}$ and
since $\lambda_{\epsilon,1}=-\lambda_{\epsilon,2}=\lambda_{\epsilon}$,
we deduce from \eqref{go4} that
\begin{equation}
|\,\mb{g}\cdot\mb{c}-2\,|\;\le\;2KL^{K}\delta\;.\label{go5}
\end{equation}
Now, define
\begin{equation*}
\widehat{\phi}\;=\;\widehat{\phi}^{\epsilon,\delta,L}\;:=\;
\Gamma\left(\frac{\phi^{\epsilon}}{\lambda_{\epsilon}}\right).
\end{equation*}
Then, in view of  Lemma \ref{leg3}, \eqref{con111}, and \eqref{go4}, we have that $\widehat{\phi}\in\mc{F}_{\mb{g}}$
(cf. \eqref{fb}) for all sufficiently small $\epsilon$. Assume from now on that $\epsilon$ is small enough so that this condition is valid. (Hence, we should send $\epsilon\to 0$ before taking any limit for $\delta$ or $L$.) Then, by
Theorem \ref{th41} and \eqref{go5},
\begin{multline*}
\frac{1}{2}\int_{\bb{R}^{d}}\frac{\sigma^{\epsilon}}
{\tau_{\epsilon}}\left|D\widehat{\phi}\,\right|^{2} \, d\xi \; \ge   \; \frac{1+o_{\epsilon}(1)}{2}\, \mc{D}(\mb{g}) \;=\;
\frac{1+o_{\epsilon}(1)}{2\lambda_{\epsilon}^{2}}\,\mc{D}
(\lambda_{\epsilon}\mb{g})  \\
  =  \; \frac{1+o_{\epsilon}(1)}{\lambda_{\epsilon}^{2}}
  \left[\frac{1}{2}\,\mc{D}_{\mb{c}}(\lambda_{\epsilon}\,\mb{g})+
  \lambda_{\epsilon}\,(\mb{g}\cdot\mb{c})\right]\\
   \ge\;
   \frac{1+o_{\epsilon}(1)}{\lambda_{\epsilon}^{2}}
   \left[\frac{1}{2}\,\mc{D}_{\mb{c}}(\lambda_{\epsilon}\mb{g})\,+\,
   \lambda_{\epsilon}\,(2-2KL^{K}\delta)\right]  .\label{con2}
\end{multline*}
On the other hand, by \eqref{go3} and by Lemma \ref{leg1},
\begin{equation*}
\frac{1}{2}\,\int_{\bb{R}^{d}}\frac{\sigma^{\epsilon}}{\tau_{\epsilon}}
\left|D\widehat{\phi}\,\right|^{2}\,d\xi
\;\le\;
\frac{1+o_{L}(1)}{2\lambda_{\epsilon}^{2}}\
\int_{\bb{R}^{d}}\frac{\sigma^{\epsilon}}{\tau_{\epsilon}}
\left|D\phi^{\epsilon}\right|^{2}\,d\xi
\;=\;\frac{1+o_{L}(1)}{\lambda_{\epsilon}}\;.\label{con3}
\end{equation*}
Combining those two inequalities, we obtain
\begin{equation*}
\frac{1+o_{L}(1)}{\lambda_{\epsilon}}
\;\ge\;
\frac{1+o_{\epsilon}(1)}{\lambda_{\epsilon}^{2}}\left[\frac{1}{2}\,\mc{D}_{\mb{c}}
(\lambda_{\epsilon}\mb{g})+\lambda_{\epsilon}\,(2-2KL^{K}\delta)\right].
\end{equation*}
We select $\epsilon$ small enough so that the $o_\epsilon(1)$ term is greater than $-1$. Then, we can re-organize the previous inequality as
 \begin{equation}
\left[(1-2KL^{K}\delta) +o_\epsilon(1)+ o_{L}(1)\right]
\lambda_{\epsilon}
\;\le \;-\frac{1 }{2}\,\mc{D}_{\mb{c}}
(\lambda_{\epsilon}\mb{g})
\;\le \;-\frac{1 }{2}\,\mc{D}_{\mb{c}}(\mb{b})
\;,\label{con31}
\end{equation}
since $\mb{b}=\bb{M}^{-1}\mb{c}$ is the minimizer of $\mc{D}_\mb{c}$ by Lemma \ref{l51}.
Now take $L$ large enough so that $o_{L}(1)<1/3$ and then take $\delta$ small
enough so that $2KL^{K}\delta<1/3$. In this way, the quantity in brackets converges to a positive number as $\epsilon\rightarrow 0$. Then, by taking $\limsup_{\epsilon\rightarrow 0}$ in the previous inequality, we obtain
\begin{equation*}
\left[1-2KL^{K}\delta-o_{L}(1)\right]\,\limsup_{\epsilon\rightarrow0}
\lambda_{\epsilon}
\;\le\;
-\frac{1}{2}\,\mc{D}_{\mb{c}}(\mb{b})\;.
\end{equation*}
Finally, send $\delta\rightarrow0$ and then $L\rightarrow\infty$
to conclude \eqref{con112}. This finished the proof.
\end{proof}

As a direct consequence of the previous lemma, we obtain the following boundedness results.
\begin{cor}\label{cor1}There exist constants $C>0$ and $\epsilon_0>0$ such that, for all $\epsilon\in(0,\,\epsilon_0)$,
\begin{equation*}
\lambda_\epsilon \le C\;\;\;\mbox{and}\;\;\;|\lambda_{\epsilon,i}|\le C \;;\;\;1\le i\le K\;.
\end{equation*}
\end{cor}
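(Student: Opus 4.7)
The plan is to read off both bounds directly from the two preceding lemmas, with Lemma \ref{leg4} supplying the bound on $\lambda_\epsilon$ and Lemma \ref{leg3} then transferring that bound to all of the $\lambda_{\epsilon,i}$'s via the $L^\infty$ control of $\phi^\epsilon$ on each valley. The first assertion is essentially immediate: Lemma \ref{leg4} gives
\begin{equation*}
\lim_{\epsilon\to 0}\lambda_\epsilon \;=\; \tfrac{1}{2}\mc{D}(\mb{b}),
\end{equation*}
and $\mc{D}(\mb{b})$ is a fixed finite number depending only on the vector $\mb{b}\in\bb{M}^{-1}\mb{c}$ fixed at the start of the theorem, so there is some $\epsilon_0>0$ past which $\lambda_\epsilon \le C := \mc{D}(\mb{b})$.

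For the second assertion, I would exploit the normalization made just after \eqref{c0}, namely $\sup_{\xi\in\bb{R}^d}|\phi^\epsilon(\xi)| = \mu_\epsilon = \max\{\mu_{\epsilon,1},\mu_{\epsilon,2}\}$. This global bound gives, for every $1\le i\le K$,
\begin{equation*}
|\lambda_{\epsilon,i}| \;=\; \Bigl|\tfrac{1}{|\mc{V}_i|}\!\int_{\mc{V}_i}\phi^\epsilon\,d\xi\Bigr| \;\le\; \mu_\epsilon,
\end{equation*}
so it suffices to bound $\mu_\epsilon$. Let $k\in\{1,2\}$ be the index realizing $\mu_\epsilon=\mu_{\epsilon,k}$; since $\lambda_{\epsilon,k}=\pm\lambda_\epsilon$, the triangle inequality together with Lemma \ref{leg3} (applied at $i=k$) gives
\begin{equation*}
\mu_\epsilon \;=\; \sup_{\xi\in\mc{V}_k}|\phi^\epsilon(\xi)| \;\le\; |\lambda_{\epsilon,k}| + \Vert\phi^\epsilon-\lambda_{\epsilon,k}\Vert_{L^\infty(\mc{V}_k)} \;\le\; \lambda_\epsilon + o_\epsilon(1)\bigl(1+\lambda_\epsilon^{1-\frac{1}{p}}\bigr).
\end{equation*}
Combining this with the bound on $\lambda_\epsilon$ from the previous paragraph yields $\mu_\epsilon\le C$ for some (possibly larger) constant $C$ and all $\epsilon\in(0,\epsilon_0)$, after shrinking $\epsilon_0$ if necessary.

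There is no genuine obstacle here; the only subtlety worth flagging is the ordering of the arguments. The right-hand side in Lemma \ref{leg3} involves $\lambda_\epsilon^{1-1/p}$, which a priori could blow up, so one must first apply Lemma \ref{leg4} to trap $\lambda_\epsilon$ and only then feed this back into Lemma \ref{leg3} to deduce the bound on $\mu_\epsilon$ and hence on the remaining $|\lambda_{\epsilon,i}|$. Everything else is bookkeeping.
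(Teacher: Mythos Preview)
Your proposal is correct and follows essentially the same route as the paper: first invoke Lemma~\ref{leg4} to bound $\lambda_\epsilon$, then observe $|\lambda_{\epsilon,i}|\le\mu_\epsilon$ and control $\mu_\epsilon$ by applying Lemma~\ref{leg3} at the index $k\in\{1,2\}$ realizing $\mu_\epsilon=\mu_{\epsilon,k}$. The paper's write-up is terser (it just records $|\mu_\epsilon-\lambda_\epsilon|=o_\epsilon(1)$), but the underlying argument and the ordering subtlety you flag are identical.
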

\begin{proof}
The first inequality is immediate from Lemma \ref{leg4}. By this inequality and Lemma \ref{leg3}, we can conclude that  $|\mu_{\epsilon}-\lambda_{\epsilon}|=o_{\epsilon}(1)$. This implies the second inequality of the corollary since
 $|\lambda_{\epsilon,i}|\le \mu_\epsilon \le \lambda_{\epsilon}+o_{\epsilon}(1)$.
\end{proof}
Now we arrive the last ingredient for the proof of Theorem \ref{t41}
\begin{lem}\label{leg5}Define  $\mb{l}_{\epsilon}=(\lambda_{\epsilon,1},\lambda_{\epsilon,2},
\cdots,\lambda_{\epsilon,K})$. Then, we have that,
\begin{equation*}
\lim_{\epsilon\rightarrow0}\mc{D}_{\mb{c}}(\mb{l}_{\epsilon})
\;=\;\mc{D}_{\mb{c}}(\mb{b})\;.
\end{equation*}
\end{lem}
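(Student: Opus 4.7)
The plan is to derive a matching upper bound for $\mc{D}_{\mb{c}}(\mb{l}_\epsilon)$ by re-running the $\Gamma$-composition machinery from Lemma \ref{leg4}, this time reading off the reverse inequality. First, I rewrite everything in terms of $\mc{D}$: since $\mb{c}=\mb{e}_1-\mb{e}_2$ and $\lambda_{\epsilon,1}=-\lambda_{\epsilon,2}=\lambda_\epsilon$, equation \eqref{le52} gives $\mc{D}_{\mb{c}}(\mb{l}_\epsilon)=\mc{D}(\mb{l}_\epsilon)-4\lambda_\epsilon$, while \eqref{le53} gives $\mc{D}_{\mb{c}}(\mb{b})=-\mc{D}(\mb{b})$. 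Since $\lambda_\epsilon\to\tfrac{1}{2}\mc{D}(\mb{b})$ by Lemma \ref{leg4}, the present lemma is equivalent to $\lim_{\epsilon\to 0}\mc{D}(\mb{l}_\epsilon)=\mc{D}(\mb{b})$. The lower bound $\liminf\mc{D}(\mb{l}_\epsilon)\geq \mc{D}(\mb{b})$ is then immediate from Lemma \ref{l51}: since $\mb{b}$ minimizes $\mc{D}_{\mb{c}}$, one has $\mc{D}(\mb{l}_\epsilon)\geq 4\lambda_\epsilon-\mc{D}(\mb{b})$, and passing to the $\liminf$ yields the claim.

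For the upper bound, I recycle the piecewise linear function $\Gamma=\Gamma^{\epsilon,\delta,L}$ and the associated vector $\mb{g}=\mb{g}^{\epsilon,\delta,L}$ from Lemma \ref{leg4}, and set
\[
\widehat\phi^\epsilon \;:=\; \lambda_\epsilon\,\Gamma(\phi^\epsilon/\lambda_\epsilon),\qquad \mb{g}_\epsilon \;:=\; \lambda_\epsilon\,\mb{g}.
\]
As noted in the proof of Lemma \ref{leg4}, for all sufficiently small $\epsilon$ (depending on $\delta,L$), Lemma \ref{leg3} guarantees $\widehat\phi^\epsilon\in\mathscr{F}_{\mb{g}_\epsilon}$, and \eqref{go4} yields $|\mb{g}_\epsilon-\mb{l}_\epsilon|\leq \lambda_\epsilon KL^K\delta$. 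By the slope bound \eqref{go3} and Lemma \ref{leg1},
\[
\int_{\bb{R}^d}\frac{\sigma^\epsilon}{\tau_\epsilon}\,|D\widehat\phi^\epsilon|^2\,d\xi \;\leq\; (1+o_L(1))^2\int_{\bb{R}^d}\frac{\sigma^\epsilon}{\tau_\epsilon}\,|D\phi^\epsilon|^2\,d\xi \;=\; (1+o_L(1))^2\cdot 2\lambda_\epsilon.
\]
Plugging $\widehat\phi^\epsilon$ into Theorem \ref{th41} with boundary data $\mb{g}_\epsilon$ then yields $[1+o_\epsilon(1)]\,\mc{D}(\mb{g}_\epsilon)\leq (1+o_L(1))^2\cdot 2\lambda_\epsilon$. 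Corollary \ref{cor1} ensures that $\mb{l}_\epsilon$ (and hence $\mb{g}_\epsilon$) is uniformly bounded, so by continuity of the quadratic form $\mc{D}$ there is a constant $C>0$ independent of $\epsilon,\delta,L$ with $|\mc{D}(\mb{l}_\epsilon)-\mc{D}(\mb{g}_\epsilon)|\leq CKL^K\delta$. Combining these estimates gives
\[
\mc{D}(\mb{l}_\epsilon) \;\leq\; \frac{(1+o_L(1))^2}{1+o_\epsilon(1)}\cdot 2\lambda_\epsilon \;+\; CKL^K\delta.
\]
Taking $\limsup_{\epsilon\to 0}$ for fixed $\delta,L$, then sending $\delta\to 0$ and finally $L\to\infty$, gives $\limsup_{\epsilon\to 0}\mc{D}(\mb{l}_\epsilon)\leq \mc{D}(\mb{b})$, which together with the lower bound closes the proof.

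Because every non-trivial ingredient, namely the $\Gamma$-composition, the slope estimate \eqref{go3}, and the closeness \eqref{go4}, was already established in Lemma \ref{leg4}, no substantially new obstacle arises. The present lemma is essentially a rearrangement: one simply invokes Theorem \ref{th41} to turn the identity $\int \tfrac{\sigma^\epsilon}{\tau_\epsilon}|D\phi^\epsilon|^2\,d\xi=2\lambda_\epsilon$ into the upper bound $\mc{D}(\mb{l}_\epsilon)\lesssim 2\lambda_\epsilon$, rather than using the matching lower bound on $\int\tfrac{\sigma^\epsilon}{\tau_\epsilon}|D\widehat\phi^\epsilon|^2d\xi$ to bound $\lambda_\epsilon$ from above as was done in Lemma \ref{leg4}.
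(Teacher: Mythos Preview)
Your proposal is correct and follows essentially the same approach as the paper: both arguments reuse the $\Gamma$-composition, the slope bound \eqref{go3}, the closeness estimate \eqref{go4}, Corollary \ref{cor1}, and Theorem \ref{th41} to obtain the upper bound, and invoke Lemma \ref{l51} for the lower bound. The only difference is cosmetic---you work with $\mc{D}$ and the equivalent statement $\lim_{\epsilon\to 0}\mc{D}(\mb{l}_\epsilon)=\mc{D}(\mb{b})$, whereas the paper works directly with $\mc{D}_{\mb{c}}$ and cites the already-packaged inequality \eqref{con31}.
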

\begin{proof}
By \eqref{go4} and the boundedness of $\lambda_{\epsilon,i}$ obtained in the previous corollary, we have
\begin{equation}
\left|\mc{D}_{\mb{c}}(\lambda_{\epsilon}\mb{g})\,-\,\mc{D}_{\mb{c}}(\mb{l}_{\epsilon})
\right|
\;\le\;
 C(L^{2K}\delta^{2}\,+\,L^{K}\delta)\label{con32}
\end{equation}
for some constant $C>0$. Combining this bound and the first inequality of \eqref{con31} yields
\begin{equation*}
  \mc{D}_{\mb{c}}(\mb{l}_{\epsilon})\;\le \;
  -2\left[1-2KL^{K}\delta+o_{L}(1)
+o_\epsilon(1)\right]\lambda_\epsilon\,+\,C(L^{2K}\delta^{2}+L^{K}\delta)\;.
\end{equation*}
Thus, by Lemma \ref{leg4}
\begin{equation*}
\limsup_{\epsilon\rightarrow0}\mc{D}_{\mb{c}}(\mb{l}_{\epsilon})
\;\le\;
\left[1-2KL^{K}\delta-o_{L}(1)\right]\mc{D}_{\mb{c}}(\mb{b})
\,+\,C(L^{2K}\delta^{2}+L^{K}\delta)
\end{equation*}
By letting $\delta\rightarrow0$ and then $L\rightarrow\infty$, we
deduce
\begin{equation}
\limsup_{\epsilon\rightarrow0}\mc{D}_{\mb{c}}(\mb{l}_{\epsilon})
\;\le\;\mc{D}_{\mb{c}}(\mb{b})\;.\label{con34}
\end{equation}
On the other hand, we know by Lemma \ref{l51} that $\mc{D}_{\mb{c}}(\mb{l}_{\epsilon})\ge\mc{D}_{\mb{c}}(\mb{b})$
and thus,
\begin{equation}
\liminf_{\epsilon\rightarrow0}\mc{D}_{\mb{c}}(\mb{l}_{\epsilon})
\;\ge\;\mc{D}_{\mb{c}}(\mb{b})\;.\label{con35}
\end{equation}
Hence, by \eqref{con34} and \eqref{con35}, we can finish the proof of lemma.
\end{proof}
Now we arrived at the final stage of the proof.
\begin{proof}[Proof of Theorem \ref{t41}]
By Lemma \ref{leg5}, we can write $\mc{D}_{\mb{c}}(\mb{l}_{\epsilon})=\mc{D}_{\mb{c}}(\mb{b})+o_{\epsilon}(1)$.
By the second part of Lemma \ref{l51}, there exists $t_{\epsilon}\in\bb{R}$ such
that
\begin{equation}\label{cms1}
  \left|(\lambda_{i,\epsilon}-t_{\epsilon})-b_{i}\right|\;=\;o_{\epsilon}(1)\;\;\mbox{for all}\;1\le i\le K\;.
\end{equation}
We now define $\psi^{\epsilon}:=\phi^{\epsilon}-t_{\epsilon}$, and claim that $\psi^{\epsilon}$ satisfies all the requirements of the theorem.

First, the condition \eqref{e410} holds for $\phi^\epsilon$ since this function is chosen as a minimizer if the functional $I$ defined in \eqref{i}. Hence, the condition  \eqref{e410} is also valid for $\psi^{\epsilon}$ as well since $t_\epsilon$ is merely a constant so that $D\phi^\epsilon=D\psi^\epsilon$. Second, we can deduce \eqref{e411}  for $\phi^\epsilon$ by combining Lemma \ref{leg1} and Corollary \ref{cor1}. By the same reason as above, the condition \eqref{e411} also holds for $\psi^\epsilon$ as well.
Finally, the condition \eqref{e412} is immediately follows from \eqref{cms1}.
\end{proof}

\section{\label{s6}Proof of Theorem \ref{t12}}

The proof of Theorem \ref{t12} is similar to the proof of \cite[Theorem 3.7]{ET} and we present it below for sake of completeness.

\begin{proof}[Proof of Theorem \ref{t12}]
Define $\mc{A}_{1}=\{\xi:\Phi(\xi)\ge H+1\}$ and $\mc{A}_{2}=\{\xi:\Phi(\xi)\ge H+2\}$.
Let $\zeta:\bb{R}^d \rightarrow [0,\,1]$ be a smooth cutoff function
such that $\zeta\equiv1$ on $\mc{A}_{1}$ and $\zeta\equiv0$
on $(\mc{A}_{2})^{c}$.

Fix $\mb{b}\in\bb{R}^{K}\setminus\mb{N}$ and let $\mb{c}=\bb{M}\mb{b}(\neq\mb{0})$.
Then, denote by $\psi^{\epsilon}$ the function
in Theorem \ref{t41} with $\mb{c}\in\mb{R}$ and $\mb{b}\in\bb{M}^{-1}\mb{c}$
Let $f=f(x,t)\in C^{\infty}(U_{T})$ be a smooth test function.
Multiplying \eqref{p02} by $\zeta f\psi^{\epsilon}$ and integrating by parts, we obtain
\begin{multline}
 \int_{0}^{T}\int_{U}\int_{\mc{A}_{2}}\zeta f\psi^{\epsilon}\rho_{t}^{\epsilon}\,d\xi dxdt\,+\,
 \int_{0}^{T}\int_{U}\int_{\mc{A}_{2}}\psi^{\epsilon}\zeta a\,D_{x}f\cdot D_{x}\rho^{\epsilon}\,d\xi dxdt  \\
 =\;-\int_{0}^{T}\int_{U}\int_{\mc{A}_{2}}\frac{\sigma^{\epsilon}}{\tau{}_{\epsilon}}fD_{\xi}u^{\epsilon}\cdot D_{\xi}(\zeta\psi^{\epsilon})\,d\xi dxdt\;.\label{e50}
\end{multline}
Now we consider three integrals in \eqref{e50} separately.

Write $\mc{A}=\mc{V}_{1}\cup\cdots\cup\mc{V}_{K}$. Then, the first integral of \eqref{e50} can be split into
\begin{equation}
\int_{0}^{T}\int_{U}\int_{\mc{A}}\zeta f\psi^{\epsilon}\rho_{t}^{\epsilon}\,d\xi dxdt\,+\,
\int_{0}^{T}\int_{U}\int_{\mc{A}_{2}\mc{\setminus\mc{A}}}\zeta f\psi^{\epsilon}\rho_{t}^{\epsilon}\,d\xi dxdt\;.\label{e52}
\end{equation}
Since $\zeta\equiv1$ on $\mc{A}$, by \eqref{ee3} and by \eqref{e412}, we get
\begin{multline}
\int_{0}^{T}\int_{U}\int_{\mc{A}}\zeta f\psi^{\epsilon}\rho_{t}^{\epsilon}\,d\xi dxdt  \; = \;  [1+o_{\epsilon}(1)]\,\sum_{i=1}^{K}\int_{0}^{T}\int_{U}
\int_{\mc{V}_{i}}fb_{i}\rho_{t}^{\epsilon}\,d\xi dx dt  \\
   \longrightarrow   \; \int_{0}^{T}\int_{U}f\sum_{i=1}^{K}b_{i}\partial_{t}\alpha_{i}\,\,dxdt\;.
   \label{e53}
\end{multline}
The second term of \eqref{e52} becomes negligible since, by \eqref{ee32},
\begin{equation}
\left|\int_{0}^{T}\int_{\mc{A}_{2}\setminus\mc{A}}
\zeta f\psi^{\epsilon}\rho_{t}^{\epsilon}\,d\xi dt\right| \;\le\; C\int_{0}^{T}\int_{\Delta}|\rho_{t}^{\epsilon}|\,d\xi dxdt\rightarrow0.\label{e531}
\end{equation}

Now we consider the second integral of \eqref{e50}. Similarly, we split it into an integral on $\mc{A}$ and $\mc{A}_{2}\setminus\mc{A}$ respectively.
Then, by \eqref{ee4}, it is easy to verify that the integral on $\mc{A}_{2}\setminus\mc{A}$
vanishes as $\epsilon\rightarrow0$, while by \eqref{ee31} the integral
on $\mc{A}$ converges to
\begin{equation}
\int_{0}^{T}\int_{U}\,D_{x}f\cdot\sum_{i=1}^{K}a_{i}b_{i}D_{x}\alpha_{i}\,dxdt
\;=\;-\int_{0}^{T}\int_{U}f\cdot\sum_{i=1}^{K}a_{i}b_{i}\,
\Delta_{x}\alpha_{i}\,dxdt\label{e54}
\end{equation}
as $\epsilon\rightarrow0$.

Finally, integrating by parts again, the last term in \eqref{e50} becomes
\begin{multline}
\int_{0}^{T}\int_{U}\int_{\mc{A}_{2}}\frac{\sigma^{\epsilon}}
{\tau{}_{\epsilon}}f\left[-\psi^{\epsilon}D_{\xi}u^{\epsilon}+u^{\epsilon}
D_{\xi}\psi^{\epsilon}\right]\cdot D_{\xi}\zeta \,d\xi dxdt  \\
\quad+\int_{0}^{T}\int_{U}\int_{\mc{A}_{2}}f\zeta u^{\epsilon}\mbox{div}_{\xi}\left[\frac{\sigma^{\epsilon}}
{\tau{}_{\epsilon}}D_{\xi}\psi^{\epsilon}\right]\,d\xi dxdt\;.\label{e60}
\end{multline}
We claim that the first term is negligible. To this end, since
$D_{\xi}\zeta\equiv0$ on $\mc{A}_{1}$, by \eqref{ub}, by \eqref{e411}, the square of this integral is
bounded by
\begin{equation*}
C\int_{0}^{T}\int_{U}\int_{\mc{A}_{1}^{c}}\frac{\sigma^{\epsilon}}
{\tau{}_{\epsilon}}\, d\xi  dxdt\,\int_{0}^{T}\int_{U}\int_{\mc{A}_{1}^{c}}
\frac{\sigma^{\epsilon}}{\tau{}_{\epsilon}}
\left[\left|D_{\xi}u^{\epsilon}\right|^{2}
+\left|D_{\xi}\psi^{\epsilon}\right|^{2}\right]\,d\xi dxdt\;.
\end{equation*}
In the last expression, the first integral vanishes as $\epsilon\rightarrow0$
by Lemma \ref{lem32}, and the second integral is bounded because
of \eqref{uc} and \eqref{e411}. This proves the claim. On the other
hand, by Theorem \ref{t41} and \eqref{ee5}, the second integral
of \eqref{e60} converges to
\begin{equation}
-\int_{0}^{T}\int_{U}\sum_{i=1}^{K}fc_{i}
\frac{\alpha_{i}}{\widehat{\mu}_{i}}\,dxdt\;=\;
\int_{0}^{T}\int_{U}f\sum_{i,j=1}^{K}\alpha_{i}r_{i,j}(b_{j}-b_{i})
\,dxdt\;.\label{e61}
\end{equation}

By combining \eqref{e50}, \eqref{e53}, \eqref{e54} and \eqref{e61},
we obtain
\begin{equation*}
\int_{0}^{T}\int_{U}f\sum_{i=1}^{K}b_{i}
\left(\partial_{t}\alpha_{i}-a_{i}\Delta_{x}\alpha_{i}\right)\,dxdt\;=\;
\int_{0}^{T}\int_{U}f\sum_{i,j=1}^{K}\alpha_{i}r_{i,j}(b_{j}-b_{i})\,dxdt\;.
\end{equation*}
Now we select $\mb{b}=\mb{e}_{i}$ for $1\le i\le K$. Then,
the previous identity implies that
\begin{equation*}
\partial_{t}\alpha_{i}-a_{i}\Delta_{x}\alpha_{i}=\sum_{j=1}^{K}(r_{j,i}\alpha_{j}-r_{i,j}\alpha_{i})\;.
\end{equation*}
This completes the proof.
\end{proof}

\section*{Appendix: Proof of Lemmas \ref{lem21} and \ref{lem22}}
\begin{proof}[Proof of Lemma \ref{lem21}]
First of all, \eqref{ub} follows from the assumption $0 \leq u^{\epsilon}_{0} \leq C$ and from the maximum principle applied to \eqref{p02}

As for the energy estimate \eqref{uc}, multiplying \eqref{p02} by $u^{\epsilon}$ and integrating over $[0,t] \times \mathbb{R}^{d} \times U$ (where $t$ is fixed), we get
\begin{multline*}
\int_{0}^{t} \int_{\mathbb{R}^{d}} \int_{U} \sigma^{\epsilon}\, u^{\epsilon}_{t} \, u^{\epsilon} \,d\xi dx dt - \int_{0}^{t} \int_{\mathbb{R}^{d}} \int_{U} \sigma^{\epsilon}\, a \,(\Delta_{x} u^{\epsilon})\, u^{\epsilon} \,d\xi dx dt\\
= \;\frac{1}{\tau_{\epsilon}} \int_{0}^{t} \int_{\mathbb{R}^{d}} \int_{U} \textup{div}_{\xi}[\sigma^{\epsilon}\, D_{\xi} u^{\epsilon}]\, u^{\epsilon} \,d\xi dx dt\;.
\end{multline*}
Using $\sigma^{\epsilon} u^{\epsilon}_{t} u^{\epsilon} = \frac{1}{2} \sigma^{\epsilon} \partial_{t} \left| u^{\epsilon} \right|$, the first term becomes
\begin{multline*}
\int_{0}^{t} \int_{\mathbb{R}^{d}} \int_{U} \sigma^{\epsilon}\, u^{\epsilon}_{t} \, u^{\epsilon}\, d\xi dx dt \\
=\; \frac{1}{2} \int_{\mathbb{R}^{d}} \int_{U} \sigma^{\epsilon}  \left|u^{\epsilon}(x,\xi,t)\right|^{2} \,d\xi dx - \frac{1}{2} \int_{\mathbb{R}^{d}} \int_{U} \sigma^{\epsilon} \left|u^{\epsilon}_{0}\right|^{2} \,d\xi dx\;.
\end{multline*}
Applying the divergence theorem with respect to $x$ (note that there are no boundary terms since $\frac{\partial u^{\epsilon}}{\partial \nu} = 0$ on $\partial U \times \mathbb{R}^{d} \times (0,t)$), the second term becomes
\begin{equation*}
- \int_{0}^{t} \int_{\mathbb{R}^{d}} \int_{U} \sigma^{\epsilon}\, a \,(\Delta_{x} u^{\epsilon})\, u^{\epsilon}\, d\xi dx dt = \int_{0}^{t} \int_{\mathbb{R}^{d}} \int_{U} \sigma^{\epsilon}\, a \left| D_{x} u^{\epsilon} \right|^{2}\, d\xi dx dt\;.
\end{equation*}
Lastly, integrating by parts with respect to $\xi$ (there are again no boundary terms), the term on the right becomes
\begin{equation*}
\frac{1}{\tau_{\epsilon}} \int_{0}^{t} \int_{\mathbb{R}^{d}} \int_{U} \textup{div}_{\xi}[\sigma^{\epsilon}\, D_{\xi} u^{\epsilon}] \,u^{\epsilon}\, d\xi dx dt \;=\;
- \frac{1}{\tau_{\epsilon}} \int_{0}^{t} \int_{\mathbb{R}^{d}} \int_{U} \sigma^{\epsilon} \,\left| D_{\xi} u^{\epsilon} \right|^{2} \,d\xi dx dt\;.
\end{equation*}
Putting everything together, we obtain
\begin{multline*}
\frac{1}{2} \int_{\mathbb{R}^{d}} \int_{U} \sigma^{\epsilon} \left|u^{\epsilon}(x,\xi,t)\right|^{2}\, d\xi dx\, + \,\int_{0}^{t} \int_{\mathbb{R}^{d}} \int_{U} \sigma^{\epsilon} \left( a \left| D_{x} u^{\epsilon} \right|^{2} + \frac{1}{\tau_{\epsilon}} \left|D_{\xi} u^{\epsilon}\right| \right) \,d\xi dx dt \\=\;  \frac{1}{2} \int_{\mathbb{R}^{d}} \int_{U} \sigma^{\epsilon} \left|u^{\epsilon}_{0}\right|^{2}\, d\xi dx\,.
\end{multline*}
By our assumption \eqref{init} on the initial data $u^{\epsilon}_{0}$, the right-hand-side is bounded, and therefore, taking the supremum over $t \in [0,T]$, we obtain
\begin{equation*}
\sup_{0 \leq t \leq T}  \int_{\mathbb{R}^{d}} \int_{U} \sigma^{\epsilon} \left|u^{\epsilon}\right|^{2}\, d\xi dx
+ \int_{0}^{T} \int_{\mathbb{R}^{d}} \int_{U} \sigma^{\epsilon} \left( a \left| D_{x} u^{\epsilon} \right|^{2} + \frac{1}{\tau_{\epsilon}} \left| D_{\xi} u^{\epsilon} \right|^{2} \right) d\xi dx dt \leq C\,.
\end{equation*}
This gives us one part of our desired estimate; to obtain the other part, multiply \eqref{p02} by $u^{\epsilon}_{t}$ and integrate to obtain
\begin{multline}
\int_{0}^{t} \int_{\mathbb{R}^{d}} \int_{U} \sigma^{\epsilon} \left| u^{\epsilon}_{t} \right|^{2} \,d\xi dx dt - \int_{0}^{t} \int_{\mathbb{R}^{d}} \int_{U} a \,\sigma^{\epsilon} \left( \Delta_{x} u^{\epsilon} \right) u^{\epsilon}_{t} \,d\xi dx dt \\ = \frac{1}{\tau_{\epsilon}} \int_{0}^{t} \int_{\mathbb{R}^{d}} \int_{U} \textup{div}_{\xi} \left[ \sigma^{\epsilon} \, D_{\xi} u^{\epsilon} \right] u^{\epsilon}_{t} \,d\xi dx dt\;.\label{cm2}
\end{multline}
The first term stays as it is; as for the second integral, integrating by parts with respect to $x$ and using $D_{x} u^{\epsilon} \cdot D_{x} u^{\epsilon}_{t} = \frac{1}{2} \partial_{t} \left| D_{x} u^{\epsilon} \right|^{2}$, we can deduce that it equals to
\begin{equation*}
\frac{1}{2}\int_{\mathbb{R}^{d}} \int_{U} a\,\sigma^{\epsilon}  \left| D_{x} u^{\epsilon}(x,\xi,t) \right|^{2} \,dx d\xi \,- \,\frac{1}{2} \int_{\mathbb{R}^{d}} \int_{U} a\,\sigma^{\epsilon}   \left| D_{x} u_{0}^{\epsilon} \right|^{2} \,dx d\xi\;.
\end{equation*}
Similarly, for the last term of \eqref{cm2}, integrating by parts with respect to $\xi$, we can rewrite it as
\begin{equation*}
- \frac{1}{2\tau_{\epsilon}} \int_{\mathbb{R}^{d}} \int_{U} \sigma^{\epsilon} \left| D_{\xi} u^{\epsilon}(x,\xi,t) \right|^{2} \,dx d\xi \,+ \, \frac{1}{2\tau_{\epsilon}} \int_{\mathbb{R}^{d}} \int_{U} \sigma^{\epsilon} \left| D_{\xi} u_{0}^{\epsilon} \right|^{2} \,dx d\xi\;.
\end{equation*}
Putting everything together, we get
\begin{multline*}
\int_{0}^{t} \int_{\mathbb{R}^{d}} \int_{U} \sigma^{\epsilon} \left| u^{\epsilon}_{t} \right|^{2}\, d\xi dx dt \\
+ \frac{1}{2} \int_{\mathbb{R}^{d}} \int_{U} \sigma^{\epsilon} \left( a \left| D_{x} u^{\epsilon}(x,\xi,t) \right|^{2} +  \frac{1}{\tau_{\epsilon}}  \left| D_{\xi} u^{\epsilon}(x,\xi,t) \right|^{2} \right) \,dx d\xi
 \\ = \; \frac{1}{2} \int_{\mathbb{R}^{d}} \int_{U} \sigma^{\epsilon} \left( a  \left| D_{x} u_{0}^{\epsilon} \right|^{2} + \frac{1}{\tau_{\epsilon}} \left| D_{\xi} u_{0}^{\epsilon} \right|^{2} \right)\, dx d\xi\;.
\end{multline*}
Now again, by our assumption \eqref{init} on the initial condition, the right-hand side is bounded, and finally taking the supremum over $t$, we obtain
\begin{multline*}
\sup_{0 \leq t \leq T} \int_{\mathbb{R}^{d}} \int_{U} \sigma^{\epsilon} \left( a \left| D_{x} u^{\epsilon}(x,\xi,t) \right|^{2}+ \frac{1}{\tau_{\epsilon}} \left| D_{\xi} u^{\epsilon}(x,\xi,t) \right|^{2} \right)  dx d\xi \\
+ \int_{0}^{T} \int_{\mathbb{R}^{d}} \int_{U} \sigma^{\epsilon} \left| u^{\epsilon}_{t} \right|^{2} \,d\xi dx dt  \leq C\;,
\end{multline*}
which, combined with the above and the fact that $a \geq a_{0} > 0$, gives our desired estimate.
\end{proof}
\begin{proof}[Proof of Lemma \ref{lem22}]

Writing $\rho^{\epsilon} = u^{\epsilon} \sigma^{\epsilon} = \left( \left(u^{\epsilon}\right)^{2} \sigma^{\epsilon} \right)^{\frac{1}{2}} \cdot \left( \sigma^{\epsilon} \right)^{\frac{1}{2}}$, we get
\begin{equation*}
\sup_{0 \leq t \leq T} \left( \int_{\Delta} \int_{U} \left| \rho^{\epsilon} \right| \,dx d\xi \right)^{2} \;\le\; \sup_{0 \leq t \leq T} \left( \int_{\mathbb{R}^{d}} \int_{U} \left| u^{\epsilon} \right|^{2} \sigma^{\epsilon} \,dx d\xi \right) \left( \int_{\Delta} \sigma^{\epsilon}\, d\xi \right) \,\rightarrow\, 0\;.
\end{equation*}
This follows because the first term on the right-hand-side is bounded by Lemma \ref{lem21}, and because the second term on the right-hand-side goes to $0$ by \eqref{e12} and \eqref{e13}. Hence \eqref{ee2} follows.

Similarly, we deduce
\begin{equation*}
\sup_{0 \leq t \leq T} \left( \int_{\Delta} \int_{U} \left| D_{x} \rho^{\epsilon} \right| \,dx d\xi \right)^{2} \,\le\,\sup_{0 \leq t \leq T} \left( \int_{\mathbb{R}^{d}} \int_{U} \left| D_{x} u^{\epsilon} \right|^{2} \sigma^{\epsilon}\, dx d\xi \right) \left( \int_{\Delta} \sigma^{\epsilon}\, d\xi \right) \,\rightarrow\, 0
\end{equation*}
from which \eqref{ee4} follows, as well as
\begin{equation*}
\int_{0}^{T}  \left( \int_{\Delta} \int_{U} \left| \rho_{t}^{\epsilon} \right| \,dx d\xi \right)^{2}\, dt \,\le\, \int_{0}^{T} \left( \int_{\mathbb{R}^{d}} \int_{D} \left| u^{\epsilon}_{t} \right|^{2}\, \sigma^{\epsilon} \,dx d\xi \right) \left( \int_{\Delta} \sigma^{\epsilon}\, d\xi \right) dt \,\rightarrow \, 0
\end{equation*}
from which \eqref{ee32} follows.

Now define $\alpha^{\epsilon}_{i} = \alpha^{\epsilon}_{i}(x,t)$ by
\begin{equation*}
\alpha^{\epsilon}_{i}(x,t)\; :=\; \int_{\mathcal{V}_{i}} \rho^{\epsilon}(x,\xi,t) \,d\xi \;=\; \int_{\mathcal{V}_{i}} u^{\epsilon}(x,\xi,t) \, \sigma^{\epsilon}\, d\xi\;.
\end{equation*}
In the same way as above, but this time using that $\int_{\mathcal{V}_{i}} \sigma^{\epsilon} d\xi \leq \int_{\mathbb{R}^{d}} \sigma^{\epsilon} = 1$, we get
\begin{align*}
&\int_{0}^{T} \int_{U} \left| \alpha_{i}^{\epsilon} \right|\, dx dt \;\le\; \int_{0}^{T} \left( \int_{\mathcal{V}_{i}} \int_{U} \left| u^{\epsilon} \right|^{2} \sigma^{\epsilon} \,dx d\xi \right) \left( \int_{\mathcal{V}_{i}} \sigma^{\epsilon}\, d\xi \right) \,dt\; \leq \;C\;,
\\
&\int_{0}^{T} \int_{U} \left| \alpha_{i,t}^{\epsilon} \right| \,dx dt \;\le\; \int_{0}^{T} \left( \int_{\mathcal{V}_{i}} \int_{U} \left| u_{t}^{\epsilon} \right|^{2} \sigma^{\epsilon} \,dx d\xi \right) \left( \int_{\mathcal{V}_{i}} \sigma^{\epsilon}\, d\xi \right) \,dt \;\leq \;C\;,\,\mbox{and}
\\
&\int_{0}^{T} \int_{U} \left| D_{x} \alpha_{i}^{\epsilon} \right| \,dx dt \;\le\; \int_{0}^{T} \left( \int_{\mathcal{V}_{i}} \int_{U} \left| D_{x} u^{\epsilon} \right|^{2} \sigma^{\epsilon}\, dx d\xi \right) \left( \int_{\mathcal{V}_{i}} \sigma^{\epsilon}\, d\xi \right)\, dt\; \leq \;C\;.
\end{align*}
Hence for each $i$, $\left\{ \alpha^{\epsilon}_{i} \right\}$ is bounded in $H^{1}(U \times [0,T])$, a reflexive Banach space, and so by weak compactness, we can extract a subsequence $\left\{ \epsilon_{n} \right\}_{n = 1}^{\infty}$ with $\epsilon_{n} \rightarrow 0$ as $n \rightarrow \infty$, such that, for some limit functions $\alpha_{i} = \alpha_{i}(x,t)$, we have $\alpha^{\epsilon_{n}}_{i} \rightharpoonup \alpha_{i}$ weakly in $H^{1}(U \times [0,T])$ as $n \rightarrow \infty$. The results \eqref{ee1}, \eqref{ee3}, \eqref{ee31} then follow by construction.

Finally, for \eqref{ee5}, notice that
\begin{multline*}
\int_{\mathcal{V}_{i}} \int_{U} \,\left| D_{\xi} u^{\epsilon} \right|\, dx d\xi \;\le\; \left( \int_{\mathcal{V}_{i}} \int_{U} \frac{\sigma^{\epsilon}}{\tau_{\epsilon}} \left| D_{\xi} u^{\epsilon} \right|^{2}\, dx d\xi \right)^{\frac{1}{2}} \left( \int_{\mathcal{V}_{i}} \frac{\tau_{\epsilon}}{\sigma^{\epsilon}} \,d\xi \right)^{\frac{1}{2}} \\
\le \; C \left( \int_{\mathcal{V}_{i}} \frac{\tau_{\epsilon}}{\sigma^{\epsilon}} \, d\xi \right)^{\frac{1}{2}}
\end{multline*}
To show that the last integral converges to $0$ as $\epsilon\rightarrow 0$, we have
\begin{multline*}
\int_{\mathcal{V}_{i}} \frac{\tau_{\epsilon}}{\sigma^{\epsilon}} \,d\xi \;= \; \frac{1}{\epsilon}\, e^{-\frac{H-h}{\epsilon}}\, [1 + o_{\epsilon}(1)]\, e^{-\frac{h}{\epsilon}}\, (2\pi \epsilon)^{\frac{d}{2}}\, \mu \int_{\mathcal{V}_{i}} e^{\frac{\Phi}{\epsilon}}\, d\xi \\
\le  \; \frac{1}{\epsilon}\, e^{-\frac{H}{\epsilon}}\, [1 + o_{\epsilon}(1)] \, (2\pi \epsilon)^{\frac{d}{2}} \,\mu \,e^{\frac{H-\eta}{\epsilon}}\, \left| \mathcal{V}_{i} \right|
\;=\;   \frac{1}{\epsilon}\, e^{-\frac{\eta}{\epsilon}}\, [1 + o_{\epsilon}(1)] \, (2\pi \epsilon)^{\frac{d}{2}}\, \mu \left| \mathcal{V}_{i} \right|\;,
\end{multline*}
where the first identity follows from the definitions of $\tau_{\epsilon}$, $\sigma^{\epsilon}$, and $Z_\epsilon$, and \eqref{e13}. Since $\eta > 0$  the last term converges to $0$ as $\epsilon\rightarrow 0$.
Therefore, it follows that, on $\mathcal{V}_{i} \times U$, $u^{\epsilon} \rightarrow u_{i}$ a.e. for some function $u_{i} = u_{i}(x,t)$. But using $\rho^{\epsilon} = \sigma^{\epsilon} u^{\epsilon},$ integrating with respect to $\xi$ on $\mathcal{V}_{i}$ and using $\int_{\mathcal{V}_{i}} \sigma^{\epsilon} \,d\xi = \hat{\mu_{i}}$, we finally obtain $u_{i} = \frac{\alpha_{i}}{\hat{\mu_{i}}}$.
\end{proof}

\smallskip\noindent{\bf Acknowledgments.} The authors wish to thank Professor Lawrence C. Evans for his fruitful discussions. The research of I. Seo is supported by the National Research Foundation of Korea NRF grant funded by the Korean government MSIT (Project 2018R1C1B6006896).

\end{document}